\def\ps@headings{%
\def\@oddhead{\mbox{}\scriptsize\rightmark \hfil \thepage}%
\def\@evenhead{\scriptsize\thepage \hfil \leftmark\mbox{}}%
\def\@oddfoot{}%
\def\@evenfoot{}}
\def\ignore#1{}
\def\S{{\cal S}}
\def \pr  {\mathbf{Pr}}
\def \OPT {\mathit{\xi}}
\def \RISKY {\textit{$\varepsilon$-Survivable }}
\def \epsnet {\textit{$\varepsilon$-net }}
\newtheorem{definition}{Definition}
\newtheorem{theorem}{Theorem}
\newtheorem{lemma}{Lemma}
\newtheorem{corollary}{Corollary}
\begin{document}
\title{Survivable Paths in Multilayer Networks\thanks{This work was supported by NSF grants CNS-0830961 and CNS-1017800, and by DTRA grants HDTRA1-07-1-0004 and HDTRA-09-1-005.}}

\author{ Marzieh Parandehgheibi, Hyang-Won Lee and Eytan Modiano\\
\normalsize{Laboratory for Information and Decision Systems}\\
\normalsize{Massachusetts Institute of Technology}\\
\normalsize{Cambridge, MA 02139}}

\maketitle \thispagestyle{headings}

\begin{abstract}

We consider protection problems in multilayer networks. In single-layer networks, a pair of disjoint paths can be used to provide protection for a source-destination pair. However, this approach cannot be directly applied to layered networks where disjoint paths may not always exist. In this paper, we take a new approach which is based on finding a \emph{set of paths} that may not be disjoint but together will survive any single physical link failure. First, we consider the problem of finding the minimum number of survivable paths. In particular, we focus on two versions of this problem: one where the length of a path is restricted, and the other where the number of paths sharing a fiber is restricted. We prove that in general, finding the minimum survivable path set is NP-hard, whereas both of the restricted versions of the problem can be solved in polynomial time. We formulate the problem as Integer Linear Programs (ILPs), and use these formulations to develop heuristics and approximation algorithms. Next, we consider the problem of finding a set of survivable paths that uses the minimum number of fibers. We show that this problem is NP-hard in general, and develop heuristics and approximation algorithms with provable approximation bounds. Finally, we present simulation results comparing the different algorithms.

\end{abstract}

\section{Introduction}
Multilayer network architecture such as IP-over-WDM has played an important role in advancing modern communication networks. Typically, a layered network is constructed by embedding a logical topology onto a physical topology such that each logical link is routed using a path in the physical topology. While such a layering approach enables to take advantage of the flexibility of upper layer technology (e.g., IP) and the high data rates of lower layer technology (e.g., WDM), it raises a number of challenges for efficient and reliable operations. In this paper, we focus on the issue of providing protection in layered networks.

The protection problem in single-layer networks is rather straightforward; namely, providing a pair of disjoint paths (one for primary and one for backup) guarantees a route between two nodes against any single link failure. This approach, however, cannot be directly applied to layered networks, because a pair of seemingly disjoint paths at the logical layer may share a physical link and thus simultaneously fail in the event of a physical link failure. To address this issue, \cite{Bhandari} introduced the notion of \emph{physically disjoint} logical paths.

In \cite{Hu}, this notion was generalized as Shared Risk Link Group (SRLG) disjoint paths, i.e., two paths between the source and destination nodes that do not share any risk (e.g., fiber and conduit). Nearly all the previous works in the context of layered network protection have focused on finding SRLG-disjoint paths~\cite{TrapAvoidance},~\cite{PROMISE},~\cite{Datta},~\cite{Xu}.

Although the SRLG-disjoint paths problem has been well studied, there are several challenges to this approach. First, SRLG-disjoint paths may not always exist. Second, such a pair of paths could be very long and thus vulnerable. Third, by associating appropriate cost to a path, the SRLG-disjoint paths problem can be modified to find a path set avoiding long paths. However, the modified problem is only known to be NP-hard~\cite{Hu} and there is no known algorithm with provable approximation guarantee. 

In order to address these challenges, we take an alternative approach that is based on finding a set of paths that together will survive any single physical link failure. Thus, in the case that SRLG-disjoint paths do not exist, we may find three or more paths such that in the event of a fiber failure, at least one of the paths remain connected. This notion of \emph{survivable path set} generalizes the traditional notion of SRLG-disjoint paths, and enables to provide protection for a broader range of scenarios. Our contributions can be summarized as follows:

\begin{itemize}
\item We introduce a new notion of survivable path set so as to provide protection even for the case where SRLG-disjoint paths do not exist.
\item We prove the NP-hardness of various protection problems that seek to find a survivable path set with different objectives, and identify the conditions for the protection problems to become polynomial time solvable.
\item We develop heuristics and approximation algorithms for the survivable path set problems.
\end{itemize}

In Section II, we present the network model. In Section III, we study the problem of finding a minimum set of paths that will survive any single fiber failure and develop several approximation algorithms. In Section IV, we design approximation algorithms for finding a survivable path set that uses the minimum number of fibers. Finally, we provide simulation results in Section V and conclusions in Section VI.

\section{Network Model}
We consider a layered network that consists of a logical topology $G_{L}=(V_{L}, E_{L})$ built on top of a physical topology $G_{P}=(V_{P}, E_{P})$ where $V$ and $E$ are the sets of nodes and links respectively. Each logical link $(i,j)$ in $E_{L}$ is mapped onto an $i-j$ path in the physical topology. This is called lightpath routing. Different lightpaths may use the same fiber (physical link), therefore when a fiber fails, all the lightpaths using that fiber will fail. Hence, a logical path survives the failure of any fiber that it does not use.

As mentioned above, we generalize the traditional notion of SRLG-disjoint paths to account for the case where there does not exist a pair of SRLG-disjoint paths. In a layered network, a set of logical paths is said to be \emph{survivable} if at least one of the paths remain connected after any single physical link failure. Hence, a survivable set consisting of two paths is a pair of SRLG-disjoint paths. Note that there may exist a survivable path set while SRLG-disjoint paths do not exist. For example, consider the physical and logical topologies in Fig. \ref{topologies}. Each dashed line in Fig. \ref{fig:MAPPING_MSP} shows the lightpath routing of each logical link over the physical topology. Under this lightpath routing, each pair of logical paths between nodes $1$ and $4$ shares some fiber.

Suppose that we want to find a set of logical paths between nodes $1$ and $4$ in Fig. \ref{topologies} that can survive any single physical link failure. Clearly, there does not exist a pair of SRLG-disjoint paths as each pair of logical paths shares a fiber. However, it is straightforward to check that the set of 3 paths can survive any single fiber cut, although they are not SRLG-disjoint. This example shows that the traditional protection schemes based on SRLG-disjoint paths (such as the ones in~\cite{Hu}) may fail to provide protection against single physical link failures, while there exists a set of paths that can together provide protection. Our goal in this paper is to address the problem of finding a set of survivable paths that together will survive any single fiber failure.

\begin{figure}[h]
\centering
\subfigure[Physical Topology]
{\label{fig:physical_MSP}\includegraphics[scale=0.40]{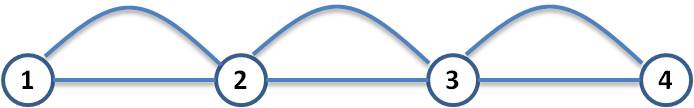}}                
\subfigure[Logical Topology]
{\label{fig:logical_MSP}\includegraphics[scale=0.40]{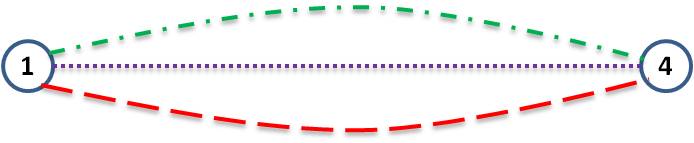}}
\subfigure[Mapping]
{\label{fig:MAPPING_MSP}\includegraphics[scale=0.40]{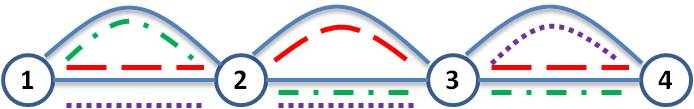}}
\caption{Topologies in Multilayer Networks}
\label{topologies}
\end{figure}

\section{Minimum Survivable Paths Set (MSP)}\label{MSP_SEC}
We start with the problem of finding a minimum survivable path set, i.e., the minimum cardinality set of paths between a pair of nodes $s$ and $t$ that survive any single physical link (fiber) failure. We first present a path-based Integer Linear Program (ILP) formulation for this problem, assuming that the entire set of $s-t$ paths with their routings over fibers is given. For each path $j$, let $P_j$ be a binary variable which takes the value 1 if path $j$ is selected, and 0 otherwise. The matrix $A \in R ^{m \times n}$ refers to the mapping of all $n$ paths over the $m$ fibers such that  $a_{ij}=0$ if path $j$ uses fiber $i$ and $a_{ij}=1$ otherwise. Let $e$ be a $m \times 1$ vector of ones.
\begin{align}
\mbox{minimize} \quad &\sum_{j=1}^n P_{j}\label{eqn:msp-obj} \\
\mbox{subject to} \quad &A \times P \geq e  \label{survivable_const} \\
 \quad &P_{j} \in \{0,1\}, \quad j=1,\cdots,n \label{eqn:msp-bin}
\end{align}

In the above, the objective function is the number of selected paths. Each row $i \in \{1,\cdots,m\}$ in constraint (\ref{survivable_const}) requires that at least one selected path survives the failure of fiber $i$, i.e., the selected path set should be survivable. Hence, the optimal solution to the above optimization problem gives a minimum survivable path set.
Although this formulation requires the knowledge of every path (which is possibly exponential in the number of fibers), the compact and clean expression of the path-based formulation enables us to analyze the useful properties of survivable path sets. Later, we will use this formulation to develop heuristics and approximation algorithms for finding a minimum survivable path set.

The MSP problem can also be formulated using a polynomial number of constraints and variables without enumerating all of the paths. Let $P_{tot}$ denote the number of selected $s-t$ logical paths, $E_{L}$ denote the set of logical links and $E_L^k$ denote the set of remaining logical links after the failure of fiber $k$. Note that for survivability, each $E_L^k$ should contain at least one of the selected paths. Let $x_{ijk}$ be 1 if link $(i,j)$ in $E_{L}^{k}$ is selected to form an $s-t$ path over the remaining graph $G_L^k=(V_L, E_L^k)$, and 0 otherwise. Let $y_{ij}$ be 1 if the selected path set uses logical link $(i,j)$, and 0 otherwise. The following link-based formulation describes the MSP problem.
\begin{equation}
\begin{array}{ll}
\mbox{minimize} \quad \quad &  P_{tot} \quad \quad \quad \quad \quad \quad \quad \quad \quad \quad \quad \quad \quad \quad \quad 
\end{array}
\end{equation}
\begin{equation}
\left.\begin{aligned}\label{flow_main}
\mbox{subject to} \quad& \sum_{(s,j) \in E_{L}} y_{sj} = P_{tot} &\\
 \quad& \sum_{(i,t) \in E_{L}} y_{it} = P_{tot} &\\
 \quad& \sum_{(i,j) \in E_{L}} y_{ij} - \sum_{(j,i) \in E_{L}} y_{ji} = 0, &\forall i\neq s,t 
\end{aligned}
\right\}
\end{equation}
\begin{equation}
\left.\begin{aligned}\label{flow_remain}
\quad &  \sum_{(s,j) \in E_{L}^{k}} x_{sjk} = 1, \quad &\forall k\\
\quad &\sum_{(i,t) \in E_{L}^{k}} x_{itk} = 1, \quad &\forall k \\
\quad & \sum_{(i,j) \in E_{L}^{k}} x_{ijk} - \sum_{(j,i) \in E_{L}^{k}} x_{jik} = 0 \quad &\forall k, \forall i\neq s,t
\end{aligned}
\right\}
\end{equation}
\begin{align}
& y_{ij} \geq x_{ijk} & \forall k,i,j\label{link_fiber_force} \\
& x_{ijk} \in \{0,1\} & \forall k,i,j
\end{align}
The constraints in (\ref{flow_remain}) require that each remaining logical graph $E_{L}^{k}$ should contain an $s-t$ path, which guarantees the survivability against any single physical link failure. By the constraints in (\ref{link_fiber_force}), logical link $(i,j)$ is selected if it has been used in some remaining logical graph $G_L^k=(V_L,E_L^k)$. Hence, the constraints in (\ref{flow_main}) require that there should be total $P_{tot}$ flows between nodes $s$ and $t$ over the selected logical links specified by $y_{ij}$'s. Consequently, the variable $P_{tot}$ counts the total number of paths selected for survivability. In Section \ref{simulation_sec}, we will use this formulation to verify the performance bound of our approximation algorithms.

\subsection{MSP in general setting}\label{MSP_general}
In this section, we show that the MSP problem is NP-hard in general and discuss some algorithms that can be used to solve the problem. In Sections \ref{sub_K} and \ref{sub_WDM}, we will study the MSP problem under practical constraints. Our first result pertains to the complexity of the MSP problem as stated in Theorem \ref{MSP_complexity} below.

\begin{theorem}\label{MSP_complexity}
Computing the minimum number of survivable paths in multilayer networks is NP-hard. In addition, this minimum value cannot be approximated within any constant factor, unless $P=NP$.
\end{theorem}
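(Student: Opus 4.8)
The plan is to exhibit a polynomial-time reduction from a known hard problem whose ILP structure matches constraint~(\ref{survivable_const}), and then bootstrap the hardness of approximation. Observe that the path-based formulation~(\ref{eqn:msp-obj})--(\ref{eqn:msp-bin}) is exactly a \emph{Set Cover} instance in disguise: rows of $A$ (fibers) are the ground elements that must be ``covered,'' columns (paths) are the covering sets, and path $j$ covers fiber $i$ precisely when $a_{ij}=1$, i.e.\ when $j$ survives a cut of fiber $i$. So the strategy is to start from an arbitrary Set Cover instance $(U,\mathcal{S})$ and build a layered network in which the $s$--$t$ paths correspond bijectively to the sets in $\mathcal{S}$, and the fibers correspond to the elements of $U$, with the incidence reversed (path $j$ \emph{avoids} fiber $i$ iff set $S_j$ \emph{contains} element $i$). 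A minimum survivable path set then has the same cardinality as a minimum set cover, giving NP-hardness; and since Set Cover is known to be inapproximable within $o(\ln |U|)$ (in particular within any constant factor) unless $P=NP$, the same bound transfers to MSP.

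The main work is the gadget construction. First I would take each element $u\in U$ and dedicate a distinct fiber $f_u$ in the physical topology. For each set $S_j\in\mathcal{S}$ I would create one logical link (or a short logical sub-path) $\ell_j$ between $s$ and $t$, and route its lightpath over the physical topology so that $\ell_j$ uses exactly the fibers $\{f_u : u\notin S_j\}$ --- that is, $\ell_j$ is killed by a cut of $f_u$ exactly when $u\notin S_j$, equivalently $\ell_j$ survives the cut of $f_u$ iff $u\in S_j$. One then has to check (i) that such a lightpath routing is realizable on \emph{some} physical graph of polynomial size --- a path can be made to traverse any prescribed subset of fibers by laying those fibers out in series and padding with private fibers used by no other lightpath --- and (ii) that there are no ``spurious'' $s$--$t$ logical paths beyond the $\ell_j$'s, which is handled by making each $\ell_j$ a single logical edge from $s$ to $t$, so the only $s$--$t$ logical paths are exactly $\ell_1,\dots,\ell_{|\mathcal{S}|}$. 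With this in place, a set of logical paths $\{\ell_j : j\in J\}$ is survivable iff for every fiber $f_u$ some $\ell_j$, $j\in J$, survives $f_u$, iff every $u\in U$ lies in some $S_j$ with $j\in J$, iff $\{S_j : j\in J\}$ is a set cover. Hence the minimum survivable path set has size equal to the minimum set cover, which establishes both claims: NP-hardness directly, and the inapproximability since any $c$-approximation for MSP would yield a $c$-approximation for Set Cover, contradicting known results unless $P=NP$.

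The step I expect to be the main obstacle is verifying that the lightpath routing is legitimate --- i.e.\ that one can simultaneously realize, on a single physical graph with polynomially many fibers and with valid (simple) lightpath routes, the prescribed fiber-usage pattern for all $|\mathcal{S}|$ logical links at once, without forcing unintended fiber sharing or unintended $s$--$t$ connectivity at the logical layer. I would resolve this by a careful layered-graph construction: arrange the $|U|$ shared fibers in a fixed linear order, give each logical link $\ell_j$ a route that walks along this line, ``detouring'' around each fiber $f_u$ with $u\in S_j$ via a private two-fiber bypass unique to $\ell_j$, and uses $f_u$ directly when $u\notin S_j$. Each private bypass fiber is used by exactly one lightpath, so cutting it affects only that one $\ell_j$; but since $\ell_j$ is not part of any survivable solution's coverage requirement for those private fibers (every other $\ell_{j'}$ survives a cut of $\ell_j$'s private fiber), these private fibers impose no binding constraint. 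A short lemma would confirm the correspondence is exact, and then the reduction and both hardness statements follow.
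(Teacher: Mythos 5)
Your proposal is correct and follows essentially the same route as the paper's Appendix~\ref{MSP_proof}: a reduction from Minimum Set Cover in which fibers play the role of ground elements, paths play the role of sets, and path $j$ survives fiber $i$ exactly when element $i$ belongs to set $S_j$, with inapproximability inherited from Set Cover. The only difference is that the paper dispatches the realizability of the required lightpath routing by citing a known physical-topology construction, whereas you sketch an explicit bypass gadget for it.
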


The proof of Theorem \ref{MSP_complexity} relies on a mapping between the survivable path set problem and the minimum set cover problem. Suppose that each path corresponds to a set of fibers that are not used by that path, i.e., survived. Then, finding a minimum survivable path set is equivalent to finding a minimum path set that survives (covers) all of the fibers. The complete proof can be found in Appendix \ref{MSP_proof}.

Since the problem is computationally hard to solve, we consider heuristics and approximation algorithms that give a set of survivable paths in polynomial time. Owing to the similarity to the set cover problem, the heuristics that have been developed for set cover problems can be used here. In particular, a common approach to solve the set cover problem is the greedy algorithm. In order to apply the greedy algorithm to our setting, one needs to enumerate all of the paths with their routings on the fibers. In general, the number of paths in a multilayer network is exponential in the total number of fibers. Moreover, in each iteration, the greedy algorithm tries to find a path that survives the maximum number of fibers. This is equivalent to the Minimum Color Path problem, which is known to be NP-hard.~\cite{shortest_path}

Another approach which can be used to approximate the set cover problem is randomized rounding. Randomized rounding gives an $O(\log m)$ approximation, where $m$ is the number of fibers~\cite{Raghavan}. This is the best possible approximation for the MSP problem, which is due to the fact that the minimum set cover problem cannot be approximated within better than a $\log m$ factor~\cite{hardness}.

Fortunately, practical systems impose certain physical constraints that make the survivable path-set problem easier to solve. For example, due to physical impairments and delay constraints, paths are typically limited in length. Furthermore, in WDM networks, the sharing of a fiber by the logical links is limited by the number of available wavelengths. In the following, we show that these physical limitations make the MSP problem tractable.

\subsection{The Path Length Restricted Version}\label{sub_K}
In this section, we assume that each logical path is restricted to use at most $K$ fibers. Restricting the length of paths (i.e., number of fibers on each path) is a realistic assumption, because each logical link is typically constrained in the number of fibers that it may use, and each logical path is constrained in the number of logical links.

\begin{lemma}\label{K-OPT}
Under the path length restriction, the optimal number of survivable paths is at most $K+1$.
\end{lemma}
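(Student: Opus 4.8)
The plan is to exhibit, for any instance in which a survivable path set exists, a survivable path set of size at most $K+1$; since the optimum is no larger than any feasible solution, this proves the bound. Start from any survivable path set $\mathcal{Q}$ (it may be large, e.g., the output of a heuristic, or simply the full collection of $s$--$t$ paths, which is survivable by hypothesis that the instance admits a solution). I will build a small survivable subset greedily, and the key observation is that each path in the restricted setting ``fails'' on at most $K$ fibers — i.e., for path $j$, the set $F_j$ of fibers it uses satisfies $|F_j|\le K$.

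The main step is the following greedy/inductive construction. Pick any path $p_1\in\mathcal{Q}$. If the singleton $\{p_1\}$ is already survivable we are done with one path; otherwise there is a nonempty set of fibers $F_{p_1}$ on which $p_1$ fails, with $|F_{p_1}|\le K$. Since $\mathcal{Q}$ is survivable, for each fiber $f\in F_{p_1}$ there is some path in $\mathcal{Q}$ surviving $f$; but in fact I want a single path $p_2$ that survives the failure of fiber $f_1$ for a \emph{chosen} $f_1\in F_{p_1}$, and I then track the set of ``still-uncovered'' fibers. More cleanly: maintain a set $U$ of fibers not yet survived by the partial set chosen so far, initialized to the set of all $m$ fibers. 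At each step, the partial set $\{p_1,\dots,p_r\}$ survives fiber $f$ unless \emph{every} chosen path uses $f$; I claim the uncovered set $U_r$ after $r$ steps satisfies $U_r \subseteq F_{p_1}\cap\cdots\cap F_{p_r}$, hence $|U_r|\le|F_{p_1}|\le K$, and — crucially — $|U_r|$ strictly decreases as long as it is nonempty, because at step $r+1$ I pick (using survivability of $\mathcal{Q}$) a path $p_{r+1}\in\mathcal{Q}$ surviving some fiber in $U_r$, which removes at least that fiber from the uncovered set. Starting from $|U_1|=|F_{p_1}|\le K$ and decreasing by at least one each step, after at most $K$ further paths we reach $U=\emptyset$, i.e.\ a survivable set of size at most $1+K$.

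I expect the one subtlety — the ``main obstacle,'' though it is mild — to be arguing that the uncovered set stays inside $F_{p_1}$ and therefore never exceeds $K$, rather than the naive bound of $m$; this is exactly why the length restriction is needed, since without it $|F_{p_1}|$ could be as large as $m$ and the greedy step would only give the $O(\log m)$ bound of the general case. One should also note the degenerate cases: if no survivable path set exists the statement is vacuous, and if some single path survives all fibers the optimum is $1\le K+1$. The tightness of $K+1$ (matching example) is not needed for the lemma as stated but can be mentioned; the construction above only needs that every $F_j$ has size at most $K$ and that $\mathcal{Q}$ is survivable, so the bound $K+1$ on the optimum follows immediately.
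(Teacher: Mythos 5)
Your proposal is correct and follows essentially the same argument as the paper: pick one path, observe that it leaves at most $K$ fibers unsurvived because it uses at most $K$ fibers, and then add at most $K$ further paths, each covering at least one previously unsurvived fiber. Your version is somewhat more careful (explicitly tracking the uncovered set and noting the degenerate cases), but the underlying idea is identical.
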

\begin{proof}
By the assumption, each path uses at most $K$ fibers, and thus at least $m-K$ fibers are survived by a path. Suppose that we have selected an arbitrary path, and want to add other paths to form a survivable path set. In the worst case, each of the newly selected paths can survive only a single fiber which is not survived by the previously selected paths. Since there are at most $K$ fibers that are not survived by the first path, we need at most $K$ additional paths to survive the rest of the fibers. Therefore, the total number of paths will not exceed $K+1$.
\end{proof}

\begin{lemma}\label{K-paths}
In the path length restricted version of MSP, the total number of paths is polynomial in the number of fibers $m$, and can be enumerated in polynomial time.
\end{lemma}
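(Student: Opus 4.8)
The plan is to bound the number of length-restricted $s$–$t$ paths by a polynomial in $m$, and then argue that this polynomial-size collection can actually be listed in polynomial time. First I would make precise what "path length $\le K$" means in the layered setting: each logical path is a sequence of logical links, and each logical link is routed over some set of fibers, so the constraint is that the union of fibers used along the path has size at most $K$. In particular the path traverses at most $K$ logical links (since each logical link uses at least one fiber), hence at most $K+1$ logical nodes. The key observation is that a length-$\le K$ path is determined by a choice of at most $K$ logical links from $E_L$, so the number of such paths is at most $\sum_{\ell=0}^{K}\binom{|E_L|}{\ell}\cdot \ell! \le (K+1)\,|E_L|^{K}$, which is polynomial in the input size whenever $K$ is a fixed constant (and more generally polynomial in $m$ since $|E_L|$ and the description of the instance are polynomially bounded, and each logical link uses at least one fiber so $|E_L| \le$ a polynomial in $m$ under the natural encoding). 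One should also note that the routing of each logical link over the fibers is part of the input, so once the sequence of logical links is fixed the routing of the path over fibers is determined.

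Next I would turn the counting bound into an enumeration procedure. Since every relevant path uses at most $K$ logical links, I would enumerate all ordered sequences $(v_0,v_1,\dots,v_\ell)$ of distinct logical nodes with $v_0=s$, $v_\ell=t$, $\ell \le K$, and each consecutive pair an edge of $G_L$; there are at most $O(|V_L|^{K})$ such sequences, each checkable in polynomial time. For each surviving candidate I would compute the set of fibers it uses (the union of the fiber sets of its logical links, available from the input routing) and discard it if that union exceeds $K$. The output is exactly the set of length-$\le K$ $s$–$t$ logical paths together with their fiber routings, i.e. the columns of a matrix $A$ of the form used in the path-based ILP \eqref{eqn:msp-obj}–\eqref{eqn:msp-bin}, but now with polynomially many columns. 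A minor cleanup step is to discard dominated paths (paths whose survived-fiber set is contained in that of another selected path), though this is not needed for the polynomial bound itself.

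The main obstacle is making the "polynomial in $m$" claim clean rather than merely "polynomial for fixed $K$": one must either treat $K$ as a constant, or observe that the natural encoding of the instance already forces $|E_L|$ and the total number of length-$\le K$ paths to be polynomially related to $m$ (for instance, if every fiber is used by at least one logical link and paths are simple, then $|E_L|$ and $|V_L|$ are $O(m)$ and the count $(K+1)|E_L|^{K}$ is polynomial in $m$ for $K$ bounded by a constant, which is the regime the paper assumes). I would state the lemma's hypothesis so that $K$ is a fixed parameter of the model, under which both the bound $O(|E_L|^{K})$ on the number of paths and the $O(|V_L|^{K})\cdot \mathrm{poly}$ running time of the enumeration are immediate; the rest is the bookkeeping of computing each path's fiber set from the given lightpath routing.
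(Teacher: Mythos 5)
Your argument is correct and essentially identical to the paper's: both bound the number of paths by noting that at most $K$ fibers implies at most $K$ logical links, count the $O(|V_L|^K)=O(m^K)$ node/link sequences of length at most $K$, and enumerate them by exhaustive search. Your added care about treating $K$ as a fixed parameter and about reading off each path's fiber set from the given lightpath routing only makes explicit what the paper leaves implicit.
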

\begin{proof}
Under the assumption, a path can consist of up to $K$ fibers, and thus at most $K$ logical links. In a graph with $n$ nodes there can be $O(n^K)$ paths of length up to $K$. Since the number of nodes is at most $2m$, the total number of logical paths of length up to $K$ is $O(m^K)$. A simple exhaustive search can be used to enumerate the paths.
% In the physical topology, the maximum number of distinct paths (i.e. paths that use different set of fibers) with size $i$ is ${{m}\choose{i}}$. Therefore, total number of distinct paths with sizes $2,\cdots,K$ is $O(m^K)$. Although finding the set of distinct paths is enough for the purpose of our algorithms, it is straightforward to show that the total number of logical paths is also polynomial and can be found in polynomial time (See \cite{TechRep}).
%On the other hand, as many as $2^{i-2}$ different logical paths can traverse a physical path of length $i$. Therefore, the maximum number of logical paths using exactly $i$ fibers is $(2m)^i$ and the total number of logical paths with sizes $2,\cdots,K$ is $O(m^K)$.
\end{proof}

\begin{theorem}\label{Ksearch}
The path length restricted version of the MSP problem can be solved in polynomial time.
\end{theorem}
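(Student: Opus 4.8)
The plan is to combine the two preceding lemmas into a brute-force search. By Lemma \ref{K-paths}, under the length restriction the entire collection of $s$--$t$ paths has size $N = O(m^K)$ and can be listed explicitly in polynomial time; while enumerating, I would also build the survivability matrix $A \in \{0,1\}^{m \times N}$ (for each enumerated path, record which fibers it uses, hence which rows of $A$ it satisfies). By Lemma \ref{K-OPT}, an optimal survivable path set contains at most $K+1$ paths. Therefore it suffices to search exhaustively over all subsets of the $N$ enumerated paths of cardinality at most $K+1$, which is a polynomially bounded number of subsets for fixed $K$.

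Concretely, I would iterate $\ell = 1, 2, \ldots, K+1$, and for each $\ell$ consider every $\ell$-subset $S$ of the path list. For a fixed $S$, deciding whether $S$ is survivable is exactly checking the rows of constraint (\ref{survivable_const}): $S$ is survivable iff for every fiber $i$ there is some $j \in S$ with $a_{ij} = 1$, which costs $O(m\ell)$ time. The algorithm returns the first (smallest-$\ell$) survivable subset encountered. If no survivable subset of size at most $K+1$ exists, then by Lemma \ref{K-OPT} no survivable path set exists at all (some fiber lies on every $s$--$t$ path), and this is detected at $\ell = 1$ along with the rest. Correctness is immediate: the search ranges over a family guaranteed by Lemma \ref{K-OPT} to include an optimal solution, and it picks one of minimum cardinality.

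The running time is $\sum_{\ell=1}^{K+1} \binom{N}{\ell}\cdot O(m\ell) = O\!\left(N^{K+1} m K\right) = O\!\left(m^{K(K+1)+1}\right)$, which is polynomial in the input size $m$ for any fixed path-length bound $K$. This completes the argument.

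There is no genuine obstacle here beyond bookkeeping, since Lemmas \ref{K-OPT} and \ref{K-paths} do all the heavy lifting; the only point I would flag explicitly is that the degree of the polynomial grows quadratically in $K$, so the procedure is polynomial for each fixed $K$ rather than jointly polynomial in $K$ and $m$. One could lower the exponent by instead feeding the $N$ enumerated ``survived-fiber'' sets to the greedy set-cover heuristic or a randomized-rounding routine, but those yield only approximate solutions; for an \emph{exact} polynomial-time algorithm the bounded-cardinality exhaustive search above is the natural choice.
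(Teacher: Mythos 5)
Your proposal is correct and follows exactly the same route as the paper's proof: enumerate the $O(m^K)$ length-restricted paths (Lemma \ref{K-paths}), then exhaustively search subsets of cardinality at most $K+1$ (Lemma \ref{K-OPT}), giving an $O(m^{K(K+1)})$-type running time that is polynomial in $m$ for fixed $K$. Your additional bookkeeping (the explicit $O(m\ell)$ survivability check and the remark that the exponent depends on $K$) only sharpens the same argument.
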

\begin{proof}
By Lemma~\ref{K-OPT}, MSP needs at most $K+1$ paths to survive any single failure. Therefore, one can find the exact solution by searching through all subsets of paths with sizes $2,3,...,K+1$. This will take $O(P^{K+1})$ iterations where $P$ is the total number of paths.
On the other hand, by Lemma~\ref{K-paths}, the total number of paths is $O(m^{K})$. Therefore, the total running time of exhaustive search is $O(m^{K(K+1)})$ which is polynomial in the total number of fibers.
\end{proof}

Although this exhaustive search returns an optimal solution, its running time can be prohibitive for large values of $m$ and $K$. This motivates us to study heuristics and approximation algorithms with better running time. First, we consider a greedy algorithm, followed by a randomized algorithm based on $\varepsilon$-net which is a well-known technique in the area of computational geometry.

\subsubsection{Greedy Algorithm}\label{MSPG-SEC}
The first heuristic we consider is a greedy algorithm which is similar to the greedy algorithm for the minimum set cover problem. The input to the greedy algorithm is the set of paths with the set of fibers used by each path and the set of all fibers. The greedy algorithm is an iterative algorithm that works as follows. In the first iteration, it selects a path using the minimum number of fibers, and updates the set of fibers not survived by the selected path. This greedy path selection is repeated until the selected path set survives all of the fibers. Following the proof of Lemma~\ref{K-OPT}, it can be shown that the greedy algorithm also finds a survivable path set with size at most $K+1$.

As discussed in Section \ref{MSP_general}, the greedy algorithm generally gives an $O(\log m)$ approximation to the minimum survivable path set. However, under the assumption of restricted path length, it provides a better approximation as stated in Theorem~\ref{K-greedy-app}.

\begin{theorem}\label{K-greedy-app}
The greedy algorithm provides an $O(\log K)$ approximation in polynomial time for the path length restricted version of MSP.
\end{theorem}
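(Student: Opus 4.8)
The plan is to reduce the analysis to the classical greedy bound for weighted set cover, but with a crucial observation that shrinks the "universe'' in the relevant sense. Recall the standard fact: if the greedy algorithm for set cover (picking at each step the set covering the most still-uncovered elements) is run on a universe of size $N$, it produces a cover of size at most $H_N \le 1 + \ln N$ times the optimum. In our setting each path $j$ "covers'' the set of fibers it does not use, the universe is the set of $m$ fibers, and the optimum is the minimum survivable path set $\OPT$. A naive application therefore only gives the $O(\log m)$ bound already mentioned in Section \ref{MSP_general}; to get $O(\log K)$ we need to argue more carefully.

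The key step is to bound the number of fibers that the greedy algorithm actually has to "work on.'' First I would select an arbitrary path $P_1$ and note, as in the proof of Lemma~\ref{K-OPT}, that $P_1$ already survives all but at most $K$ fibers, since each path uses at most $K$ fibers. Thus after the first greedy selection (which, being the path using the fewest fibers, is no worse than an arbitrary path), the set $U$ of fibers not yet survived has $|U| \le K$. From that point on, the greedy algorithm is exactly running the set-cover greedy on the residual instance whose universe is $U$, and every subsequent path must cover at least one element of $U$. Applying the $H_{|U|}$ bound to this residual instance, the number of paths greedy picks after the first one is at most $H_{|U|} \cdot \OPT' \le (1 + \ln K)\,\OPT$, where $\OPT'$ is the optimum cover of $U$ and $\OPT' \le \OPT$ because the true optimal survivable set in particular covers $U$. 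Adding the one initial path gives a total of at most $1 + (1+\ln K)\,\OPT = O(\log K)\cdot \OPT$, since $\OPT \ge 2$. Polynomial running time follows from Lemma~\ref{K-paths}: there are only $O(m^K)$ paths, each greedy iteration scans them all, and by Lemma~\ref{K-OPT} there are at most $K+1$ iterations.

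The main obstacle is making the "residual instance'' argument airtight — specifically, justifying that after the first selection the greedy process really is oblivious to all fibers outside $U$, and that the comparison optimum does not degrade. The point to be careful about is that the true optimal survivable path set has size $\OPT$ and covers \emph{every} fiber, hence in particular covers $U$, so it is a feasible (not necessarily optimal) cover of the residual universe; the residual optimum is therefore $\le \OPT$, which is all we need. One should also confirm that greedy's first pick, which minimizes fibers used rather than being "arbitrary,'' only helps: it leaves $|U| \le K$ just as an arbitrary path would, so the bound is unaffected. With these points settled the harmonic-number estimate $H_K \le 1 + \ln K$ finishes the argument.
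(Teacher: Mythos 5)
Your proposal is correct and takes essentially the same route as the paper's proof: both exploit the fact that after the first selected path at most $K$ fibers remain unsurvived, and then run the standard greedy set-cover analysis on that residual universe of size at most $K$ (the paper re-derives the geometric decay $n_i \leq K(1-\frac{1}{\OPT})^i$ explicitly, while you invoke the harmonic-number bound $H_K \leq 1+\ln K$ as a black box), with the polynomial running time following from Lemmas~\ref{K-OPT} and~\ref{K-paths} in both cases.
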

\begin{proof}
Let $\OPT$ be the size of minimum survivable path set. Let $n_{i}$ be the number of fibers that are not survived after the $i^{th}$ iteration of the greedy algorithm. Clearly, we have $n_{1} \leq K$. Now, note that there is a path that survives at least $\frac{n_{1}}{\OPT}$ of the remaining $n_{1}$ fibers, because otherwise the size of the optimal path set would be larger than $\OPT$. Hence, in the second iteration, the greedy algorithm would select a path that survives at least $\frac{n_{1}}{\OPT}$ of fibers. Thus,
\begin{equation}
n_{2} \leq n_{1}-\frac{n_{1}}{\OPT} \leq K(1-\frac{1}{\OPT}).
\end{equation}
Similarly, 
\begin{equation}
n_{3} \leq n_{2} - \frac{n_{2}}{\OPT} \leq K(1-\frac{1}{\OPT})^{2},
\end{equation}
and in general,
\begin{equation}
n_{i} \leq K(1-\frac{1}{\OPT})^{i}.
\end{equation}

The greedy algorithm will terminate when $n_{t} < 1$, and this condition is satisfied when 
\begin{equation}\label{greedy_inequality}
K(1-\frac{1}{\OPT})^{t} < 1,
\end{equation}
where $t$ is the total number of iterations. Since $1-x<e^{-x}$ for $x>0$, inequality (\ref{greedy_inequality}) is satisfied when 
\begin{equation}
Ke^{-\frac{t}{\OPT}}\leq 1 \Leftrightarrow t \leq \OPT\times \log K.
\end{equation}
Therefore, the greedy algorithm provides an $O(\log K)$ approximation.

To prove the polynomial time complexity,  note that in each iteration of the greedy algorithm, the best path can be found in $O(m^K)$ by searching through all the paths (see the proof of Theorem~\ref{Ksearch}). Furthermore, as mentioned above, the greedy algorithm terminates in at most $K+1$ iterations. Therefore, the computational complexity of the greedy algorithm is $O(Km^{K})$.
\end{proof}

Although the greedy algorithm runs significantly faster than the exhaustive search algorithm, its running time can still be prohibitive for large $K$ and $m$. Hence, we develop a novel randomized algorithm which has a considerably better running time. This algorithm builds upon solutions to the closely related Set Cover and Hitting Set problems. In particular, the algorithm is based on $\varepsilon$-net, a concept in computational geometry, which provides an approximation algorithm for the Hitting Set problem.

\subsubsection{$\varepsilon$-net Algorithm}\label{epsilon-net_description_section}
Our $\varepsilon$-net algorithm is an iterative algorithm which selects each path with some probability. If all the fibers are survived by the selected path set in the first iteration, the algorithm terminates. Otherwise, it changes the probability of selecting each path and selects a new set of paths using the new probabilities, until all fibers are survived. 

Let $W_j$ be the weight of path $j$, initialized as $W_j=1$. Define the weight of each fiber $i$ to be the sum of the weights of paths surviving fiber $i$, i.e.,
\begin{equation}
%W(f_i)=\sum_{\forall \mbox{ path } j \mbox{ surviving fiber } i} W_j.
W(f_i)=\sum_{j:a_{ij}=1} W_j.
\end{equation}
\begin{definition}
A fiber is said to be \RISKY if
\begin{equation}
W(f_i) \geq \varepsilon \sum_{j=1}^{n} W_j\mbox{ for some } \varepsilon\in(0,1), 
\end{equation} 
where $n$ is the total number of paths.
\end{definition}

Note that when all the paths have the same weight of $1$, a fiber is \RISKY if it is survived by at least $\varepsilon \times n$ paths. Hence, if a fiber is \RISKY with large $\varepsilon$, then it is likely to be survived by randomly selected paths. This observation is exploited in our $\varepsilon$-net algorithm as discussed below.

By applying the randomized algorithm for the hitting set problem from~\cite{Varadarajan} and~\cite{Matousek}, we can obtain a path-selection algorithm for selecting a random subset of paths that will survive all of the \RISKY fibers, with high probability. In particular, the algorithm finds a set of paths via $s$ independent random draws (with replacement), such that in each draw, a path is selected from the entire path set according to the probability distribution $\mu(P_j)=\frac{W_j}{\sum_{j=1}^{n}W_j},\forall j$. 

Our $\varepsilon$-net algorithm iteratively applies this random path selection as follows. After each iteration, it checks the survivability of the selected path set. If not all fiber failures are survived, the algorithm doubles the weight of all paths that survive the failure of fibers in $\bar S$, where $\bar S$ is the set all the fibers that are not survived yet (so that such fibers are more likely to be survived by the new path set). The random path selection is repeated with the new probability distribution.

Let $\OPT$ be the optimal solution to the MSP problem. By applying the results in \cite{HittingSet,Bronniman}, the following theorem can be proved.
\begin{theorem}\label{eps-net-thm-Krestricted}
Assume $s = c\frac{\log K}{\varepsilon} \log \frac{\log K}{\varepsilon}$, where $c$ is a constant. The $\varepsilon$-net algorithm finds a set of survivable paths of size $O(\log K \log \OPT)\OPT$, with high probability.
\end{theorem}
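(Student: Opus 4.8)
The plan is to recast the path-length-restricted MSP as a hitting-set problem over a range space of small VC dimension, and then apply the iterative-reweighting (``$\varepsilon$-net'') framework of Br\"onnimann and Goodrich~\cite{HittingSet,Bronniman} together with the $\varepsilon$-net theorem for bounded-VC-dimension range spaces~\cite{Varadarajan,Matousek}. Let $X=\{1,\dots,n\}$ index the set of all source--destination paths, which by Lemma~\ref{K-paths} is of polynomial size and enumerable in polynomial time, and for each fiber $i$ put $R_i=\{\,j\in X: a_{ij}=1\,\}$, the set of paths that survive fiber $i$. A path set $N\subseteq X$ is survivable exactly when $N\cap R_i\neq\emptyset$ for every $i$, so a minimum survivable path set is precisely a minimum subset of $X$ meeting every range $R_i$. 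Under the current weights, with $\mu(j)=W_j/\sum_k W_k$, the measure $\mu(R_i)$ is at least $\varepsilon$ exactly for the \RISKY fibers, and an $\varepsilon$-net of $(X,\{R_i\})$ is exactly a path set surviving every \RISKY fiber --- which is what a single iteration of the algorithm draws.

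The one nonroutine step, and the reason the bound carries $\log K$ rather than the $\log n$ one gets for arbitrary instances, is the claim that $(X,\{R_i\})$ has VC dimension $O(\log K)$. Suppose $j_1,\dots,j_d\in X$ are shattered, i.e.\ for every $T\subseteq\{j_1,\dots,j_d\}$ some fiber $i$ satisfies $\{\,j_\ell: j_\ell\in R_i\,\}=T$. Consider the $2^{d-1}$ subsets $T$ that omit $j_1$: realizing such a $T$ requires a fiber $i$ with $j_1\notin R_i$, that is, a fiber that path $j_1$ uses. Thus the map sending each fiber used by $j_1$ to its trace $\{\,j_\ell: j_\ell\in R_i\,\}$ is onto all $2^{d-1}$ subsets of $\{j_2,\dots,j_d\}$, so path $j_1$ uses at least $2^{d-1}$ fibers; since it uses at most $K$ fibers by the path-length restriction, $2^{d-1}\le K$, hence $d\le 1+\log_2 K=O(\log K)$. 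The $\varepsilon$-net theorem then guarantees that a sample of $s=c\,\frac{\log K}{\varepsilon}\log\frac{\log K}{\varepsilon}$ paths drawn i.i.d.\ from $\mu$ is an $\varepsilon$-net with probability bounded away from zero (and the per-iteration failure probability can be pushed down to $\Theta(1/\OPT)$ while changing $s$ only by lower-order terms) --- which is exactly the sample size the algorithm uses.

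It remains to plug this into the reweighting analysis. Take $\varepsilon=\Theta(1/\OPT)$; by Lemma~\ref{K-OPT} we have $\OPT\le K+1$, so one can run the procedure once for each candidate value in $\{2,\dots,K+1\}$ and keep the smallest survivable set found, making the lack of knowledge of $\OPT$ cost only a polynomial factor. In an iteration where the drawn set $N$ is an $\varepsilon$-net but not yet survivable, every unsurvived fiber is ``light'' ($\mu(R_i)<\varepsilon$), so doubling the weights of the paths that survive it multiplies the total weight by at most $1+\varepsilon$, while at least one path of a fixed optimal survivable set --- which must cover that fiber --- is doubled. The standard potential argument (after $t$ iterations the total weight is at most $n(1+\varepsilon)^t\le n e^{\varepsilon t}$, yet at least $2^{t/\OPT}$ since some optimal path is doubled $\ge t/\OPT$ times) forces termination within $t=O(\OPT\log n)=O(\OPT\,K\log m)$ iterations with high probability, and the set returned is one $\varepsilon$-net, of size at most $s=O(\OPT\,\log K\,\log\OPT)$, as claimed. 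I expect the real work to be entirely in making these pieces mesh --- the VC-dimension bound, and then the coordinated choice of $\varepsilon$, the per-iteration failure probability, and the guess for $\OPT$ --- so that the black-box Br\"onnimann--Goodrich guarantee comes out with $\log K$ in place of $\log n$; for the algorithm exactly as described, which rescales using all currently unsurvived fibers at once, it suffices for the analysis to instead rescale using a single unsurvived fiber per failed iteration.
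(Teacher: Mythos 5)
Your proposal follows essentially the same route as the paper: recast the problem as hitting a range space whose ranges are the sets of paths surviving each fiber, bound its VC dimension by $1+\log_2 K$ via exactly the same shattering argument (the $2^{d-1}$ traces omitting a fixed path must be realized by distinct fibers that the path uses, so $2^{d-1}\le K$), and then invoke the $\varepsilon$-net sampling theorem inside the Br\"onnimann--Goodrich reweighting framework with $\varepsilon=\Theta(1/\OPT)$. Your write-up actually supplies details the paper delegates to citations (the weight-doubling potential argument for the iteration bound, the guessing of $\OPT$, and the observation that the analysis should reweight with respect to a single unsurvived fiber per failed iteration), but the underlying argument is the same.
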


This theorem together with Lemma \ref{K-OPT} implies that the $\varepsilon$-net algorithm finds a survivable path set of size $O(\log ^2 K)\OPT$. Moreover, it can be shown that the algorithm requires $O(K \log(\frac{m}{K}))$ iterations to achieve this performance bound. On the other hand, the path-selection algorithm needs to select $O(\frac{\log K}{\varepsilon} \log \frac{\log K}{\varepsilon})$ paths in each iteration. Therefore, the computational complexity of the $\varepsilon$-net algorithm is $O(K \log (K) \log (m) \log(\log (K)) )$. Table \ref{K_table} summarizes the performance of each algorithm under the path length restriction.
\begin{table}[ht]
\centering
\begin{tabular}{|c|c|c|c|}
\hline
Method & Approximation & Running Time & T\\
\hline
ExS & Exact Solution & $O(m^{K(K+1)})$ & D\\
\hline
Greedy & $O(\log K)$ & $O(K m^K)$ & D\\
\hline
$\varepsilon$-net & $O(\log K \log \OPT)$ & $O(K \log (K) \log (m) \log(\log (K)) )$ & P\\
\hline
\end{tabular}
\caption{Performance bounds under path length restricted version: ExS-Exhaustive Search, T-Type, D-Deterministic, P-Probabilistic}
\label{K_table}\vspace{-0.3cm}
\end{table}

\subsection{Wavelength Restricted version}\label{sub_WDM}
Another important practical constraint is that in WDM-based networks, the number of lightpaths using a fiber is limited to say $W$, which is the number of wavelengths supported over a fiber. In this section, we assume that a set of logically disjoint paths with their mapping on the physical topology is given, and the goal is to find a minimum survivable path set among those paths under the WDM restriction. Note that the set of logically disjoint paths can be abstract to a logical topology with two nodes and parallel links (e.g., the one in Fig. \ref{fig:logical_TOP_MFSP}). Clearly, in this setting, the WDM restriction implies that \emph{each fiber can be used by at most $W$ paths}. Using this property, it can be shown that the MSP problem under the WDM restriction can be solved in polynomial time. To prove this, we need the following lemma.

\begin{lemma}\label{WDM-OPT}
Under the wavelength restriction, the minimum number of survivable paths is at most $W+1$.
\end{lemma}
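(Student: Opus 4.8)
The plan is to mimic the counting argument used in the proof of Lemma~\ref{K-OPT}, but with the roles of "fibers on a path" and "paths on a fiber" interchanged, exploiting the structural simplification afforded by the WDM setting. Recall that here the logical topology reduces to two nodes $s$ and $t$ joined by parallel links, and each logical path is a single such link; the key property noted in the text is that each fiber is used by at most $W$ of these paths, and hence \emph{survived} by all but at most $W-1$ of the candidate paths. Wait---more precisely, if a fiber is used by $w \le W$ paths, then those $w$ paths fail when it is cut and the remaining paths survive; the quantity that will matter is how many paths can fail together, which is at most $W$.

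First, I would fix an arbitrary candidate path $p_1$ and put it in the solution set. Consider any fiber $i$ that is \emph{not} survived by $p_1$, i.e., a fiber that $p_1$ uses. For survivability we must ensure that whenever fiber $i$ is cut, some selected path other than the failed ones remains; since at most $W$ paths use fiber $i$ and one of them is $p_1$, there are at most $W-1$ other paths using $i$, and \emph{every} path not using $i$ survives its cut. So for each such "bad" fiber $i$ we need to add at least one path that does not use $i$. The crucial observation is: how many distinct bad fibers can $p_1$ have? Since $p_1$ is a single logical link mapped to a lightpath, a priori this could be large---so the naive "one new path per bad fiber" bound does not immediately give $W+1$. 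The resolution is to instead count the fibers that must still be covered and argue that a single well-chosen additional path can knock out many of them, OR, more cleanly, to observe that the relevant bound comes from the other direction: after selecting a set $S$ of paths, a fiber $i$ threatens survivability only if it is used by \emph{all} paths in $S$; since each fiber is used by at most $W$ paths, once $|S| > W$ no fiber can be used by all of $S$ (assuming the paths in $S$ are distinct), and hence $S$ is automatically survivable. Thus any set of $W+1$ distinct candidate paths that is not already survivable with fewer cannot exist, and in particular the minimum survivable path set---if one exists at all among the given paths---has size at most $W+1$.

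So the skeleton I would write is: (i) state that a set $S$ of distinct paths fails survivability exactly when some fiber is used by every path in $S$; (ii) invoke the WDM restriction to say each fiber is used by at most $W$ paths, so no fiber can be common to $W+1$ distinct paths; (iii) conclude that any set of $W+1$ distinct paths is survivable, hence the minimum is at most $W+1$ (and one should note the hypothesis guarantees at least $W+1$ logically disjoint candidate paths are available, or else the statement is about whatever number is available). The main obstacle---and the step I would be most careful about---is step (i): making precise that "survivable" for this two-node parallel-link instance is equivalent to "no single fiber is used by all selected paths," which uses that each path is a single link so that a path fails if and only if a fiber it uses is cut, and that only one fiber fails at a time. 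Once that equivalence is nailed down, the pigeonhole on the $W$-bound is immediate and the rest is a one-line conclusion, exactly parallel in spirit to Lemma~\ref{K-OPT}.
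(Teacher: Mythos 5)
Your final argument is correct and is essentially the paper's own proof: any set of $W+1$ distinct paths must be survivable because a single fiber can be used by at most $W$ paths and hence cannot disconnect all of them, so the minimum survivable set has size at most $W+1$. The exploratory detour at the start is unnecessary, but the skeleton you settle on (survivability fails only if one fiber is shared by every selected path, then pigeonhole on the $W$ bound) matches the paper's contradiction argument.
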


\begin{proof}
Suppose that the minimum survivable path set contains more than $W+1$ paths. This implies that there exists a fiber whose failure disconnects at least $W+1$ paths (so that more than $W+1$ paths are needed for survivability), which contradicts to the fact that under the WDM restriction, each fiber can be used by at most $W$ paths.
\end{proof}

\begin{theorem}\label{WDM-complexity}
Under the wavelength restriction, the MSP problem can be solved in polynomial time.

\begin{IEEEproof}
It can be shown that under the WDM restriction, the given path set contains $O(m)$ paths. By Lemma \ref{WDM-OPT}, we only need to enumerate path sets of size up to $W+1$ in order to find a minimum survivable path set. Clearly, this can be done in $O(m^{W+1})$ time. More details can be found in Appendix \ref{WDM_polynomial}.
\end{IEEEproof}
\end{theorem}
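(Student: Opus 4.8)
The plan is to convert the two facts already available—the cardinality bound of Lemma~\ref{WDM-OPT} and the sparsity of the path set forced by the wavelength constraint—into a brute-force enumeration that provably runs in polynomial time. The first step I would carry out is to bound the number $N$ of candidate paths. Since the given paths are logically disjoint $s$–$t$ paths with $s\neq t$, each of them traverses at least one logical link and hence at least one fiber. On the other hand, the WDM restriction guarantees that every fiber is used by at most $W$ of these paths, so the number of (path, fiber) incidences is at most $Wm$. Because each path contributes at least one such incidence, we get $N \leq Wm = O(m)$, treating $W$ as a fixed constant.

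The second step is to restrict attention to small subsets. By Lemma~\ref{WDM-OPT}, a minimum survivable path set contains at most $W+1$ paths; moreover a single path is never survivable, since the failure of any fiber it uses disconnects it. Hence it suffices to enumerate, for each $k = 2, 3, \ldots, W+1$, all $\binom{N}{k}$ subsets of the given paths of size $k$, test each subset for survivability, and output a survivable subset of minimum size (or report infeasibility if none is survivable). The number of subsets inspected is $\sum_{k=2}^{W+1}\binom{N}{k} = O(N^{W+1}) = O(m^{W+1})$.

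The third step is to bound the cost of one survivability test. A subset $S$ is survivable precisely when, for every fiber $i$, at least one path in $S$ does not use fiber $i$—equivalently, in the matrix $A$ of (\ref{survivable_const}), every row $i$ satisfies $\sum_{j\in S} a_{ij}\geq 1$. This is checked by scanning, for each of the $m$ fibers, the at most $W+1$ paths of $S$, at cost $O(Wm)$ per subset. Combining, the whole algorithm runs in $O(Wm\cdot m^{W+1})$ time, which is polynomial in $m$ since $W$ is constant, and it clearly returns an optimal solution because it exhaustively examines every candidate of admissible size.

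The step I expect to be the crux is the $O(m)$ bound on the number of given paths in the first step: it is exactly what keeps the exponent $W+1$ from being applied to an exponentially large ground set, and it relies on the observation that each logical path uses at least one fiber, so that the total incidence count $Wm$ dictated by the WDM constraint directly caps the number of paths. Everything after that is routine counting, and the remaining details (including the precise reduction to the two-node parallel-link abstraction) can be deferred to Appendix~\ref{WDM_polynomial}.
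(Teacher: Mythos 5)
Your proof is correct and follows essentially the same route as the paper: bound the number of candidate paths by $O(Wm)=O(m)$, then exhaustively enumerate subsets of size at most $W+1$ (justified by Lemma~\ref{WDM-OPT}) and test each for survivability, giving $O(m^{W+1})$ time up to polynomial factors. The only difference is in how the $O(m)$ path bound is obtained: the paper's Appendix~\ref{WDM_polynomial} argues via Max-Flow/Min-Cut that at most $W\cdot MC\leq W\cdot m$ logical paths can exist, whereas your path--fiber incidence count (each path uses at least one fiber, each fiber serves at most $W$ paths) reaches the same bound more directly and is, if anything, cleaner.
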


Although there exists a polynomial time optimal algorithm, it requires excessive computation for large values of $W$ and $m$. As in the case of restricted path length, we have developed approximation algorithms with better running time. Table \ref{WDM_table} shows the summary of our approximation algorithms under the wavelength restriction (See~\ref{epsnet_WDM} for details).

\begin{table}[ht]\label{WDM_table}
\centering
\begin{tabular}{|c|c|c|c|}
\hline
Method & Approximation & Running Time & T\\
\hline
ExS & Exact Solution & $O(W^{W+1}m^{W+1})$ & D\\
\hline
Greedy & $O(\log m)$ & $O(W^2 m)$ & D\\
\hline
$\varepsilon$-net & $O(\log W \log \OPT)$ & $O(W \log (W)  \log (m) \log(\log (W)))$ & P\\
\hline
\end{tabular}
\caption{Approximation bounds under wavelength restricted version: ExS-Exhaustive Search, T-Type, D-Deterministic, P-Probabilistic}
\label{WDM_table}\vspace{-0.4cm}
\end{table}

\section{Minimum number of physical fibers in Survivable Paths (MFSP)}
Our focus so far has been on providing protection using the minimum number of paths. In the this section, our goal is to find a survivable path set that uses the minimum number of fibers. This problem seems to have a direct connection to the minimum cost survivable path set problem where the cost of a path is the number of fibers used by that path. However, this is not true owing to the fact that costs of paths are not additive, i.e., a fiber that is used by multiple paths only adds one unit of cost. In order to make this point clear, consider Fig. \ref{additive cost}. A minimum cost survivable path set problem will find paths 1 and 2 as the set of survivable paths with total cost 7, while the MFSP problem will find paths 2, 3 and 4 as the optimal survivable path which has the total cost 6. In the next section we will develop ILP formulations, and analyze the complexity of MFSP.

\begin{figure}[ht]
\centering
\subfigure[Logical Topology]
{\label{fig:logical_TOP_MFSP}\includegraphics[scale=0.38]{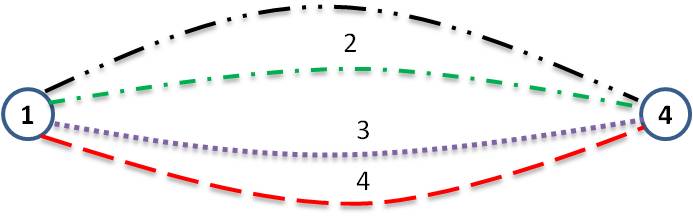}}                
\subfigure[Routing]
{\label{fig:ROUTING_MFSP}\includegraphics[scale=0.38]{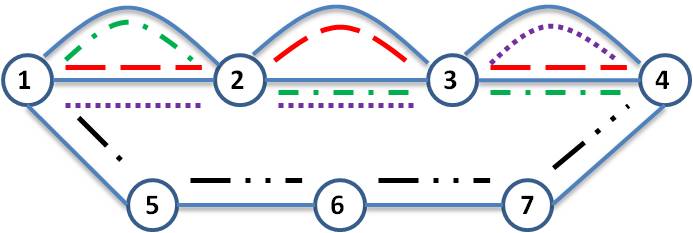}}
\caption{Routing in Multilayer Network}
\label{additive cost}
\end{figure}

\subsection{ILP Formulation and Complexity}
We start with an ILP formulation of the problem. Similar to the MSP problem, the MFSP problem can be formulated in several different ways, but here we only present the path-based formulation which will be used for developing heuristics and approximation algorithms. Given the set of paths and associated fibers, for each path $j$, assign a binary variable $P_j$ which takes the value 1 if path $j$ is selected and 0 otherwise. Similarly, for each fiber $i$, assign a binary variable $f_i$ which takes the value 1 if fiber $i$ is selected and 0 otherwise. The matrix $A$ and vector $e$ are defined in the same way as in the MSP formulation (\ref{eqn:msp-obj})-(\ref{eqn:msp-bin}).
\begin{align}
MFSP: \quad\quad \mbox{minimize}  \quad& \sum_{i=1}^m f_{i}\\
\quad\quad\quad \mbox{subject to} \quad& A \times P \geq e \label{MFSP_survive_const}\\
\quad\quad\quad\quad&f_{i} \geq P_{j} \quad\quad  \forall f_{i} \in P_{j} \label{MFSP_fiber_path_const} \\
\quad\quad\quad\quad& P_{j} \in \{0,1\} \quad \forall P_j
\end{align}
In the above, the objective function is the number of fibers used by the selected paths. Again, the constraints in (\ref{MFSP_survive_const}) require the selected path to be survivable. The constraints in (\ref{MFSP_fiber_path_const}) relate the selected paths and fibers, such that a fiber is selected if at least one of the paths using the fiber is selected. Clearly, the optimal solution to the above optimization problem gives a set of survivable paths that use the minimum number of fibers. This MFSP problem can be shown to be NP-hard.

\begin{theorem}
Computing the set of survivable paths using the minimum number of physical fibers is NP-hard.
\end{theorem}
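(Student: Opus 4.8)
The plan is to reduce from minimum set cover, reusing the layered-network construction already built for Theorem~\ref{MSP_complexity} and augmenting it so that the number of fibers used by a path set encodes the number of paths in that set. Starting from a set-cover instance with universe $\{e_1,\dots,e_m\}$ and subsets $S_1,\dots,S_n$, I would take the network behind Theorem~\ref{MSP_complexity}, in which there is one fiber $f_i$ per element and $n$ logically disjoint $s$--$t$ paths $P_1,\dots,P_n$ routed so that $a_{ij}=1$ (i.e.\ $P_j$ survives $f_i$) exactly when $e_i\in S_j$. Then I would fix $B=m+1$ and lengthen each $P_j$ by a chain of $B$ new logical links, each realized as a lightpath over a fresh physical fiber that no other logical link uses; since the paths are logically disjoint, this attaches to $P_j$ a private bundle $g_{j,1},\dots,g_{j,B}$ that only $P_j$ traverses.

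Next I would pin down the correspondence. For $T\subseteq\{P_1,\dots,P_n\}$ with $|T|\ge 2$, every bundle fiber is survived automatically --- any path other than its owner avoids it --- so $T$ is survivable iff every $f_i$ is survived by some $P_j\in T$, i.e.\ iff $\{S_j:j\in T\}$ covers the universe; and since a path never survives its own bundle, survivable sets necessarily satisfy $|T|\ge 2$. The number of fibers used by a survivable $T$ is exactly $|T|\,B+r(T)$, where $r(T)\in[0,m]$ counts the element fibers used by some path of $T$. Hence, writing $\tau^{*}$ for the minimum cover size and $F^{*}$ for the MFSP optimum, every survivable set uses at least $\max(\tau^{*},2)\,B$ fibers, while a minimum cover (padded to two sets if $\tau^{*}=1$) gives a survivable set using at most $\max(\tau^{*},2)\,B+m$, so
\[
\max(\tau^{*},2)\,B \ \le\ F^{*} \ \le\ \max(\tau^{*},2)\,B+m .
\]
Because $B=m+1>m$, this forces $\max(\tau^{*},2)=\lfloor F^{*}/B\rfloor$, and as the case $\tau^{*}=1$ is trivial to detect, an exact MFSP algorithm would recover $\tau^{*}$; NP-hardness follows.

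The step I expect to be the main obstacle is the bookkeeping needed to certify that the augmented instance is a genuine layered network with precisely the claimed fiber-sharing pattern: confirming that the construction behind Theorem~\ref{MSP_complexity} can be taken with logically disjoint $s$--$t$ paths so that the appended chains are truly private fibers, and handling the degenerate small-cover cases (covers of size one, and the need to force at least two selected paths). Given the construction of Theorem~\ref{MSP_complexity}, these are routine; the displayed inequality is the substantive part of the argument.
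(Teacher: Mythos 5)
Your reduction is correct, but it reaches NP-hardness by a genuinely different construction than the paper's. The paper reduces from Minimum 3-Set Cover using a bespoke physical topology: set-nodes on the left, element-nodes on the right, a chain of $L\ge 3m+3n$ fibers inserted between $s$ and each set-node so that the fiber count is dominated by how many set-nodes are touched, and a tail gadget of $m$ parallel lightpaths that forces every survivable path set to contain $m$ paths, one per element-node; minimizing fibers then amounts to touching the fewest set-nodes, i.e., finding a minimum cover. You instead recycle the incidence pattern from the proof of Theorem~\ref{MSP_complexity} and append to each path a private bundle of $B=m+1$ fresh fibers, so that any survivable set $T$ uses $|T|B+r(T)$ fibers with $0\le r(T)\le m<B$; this makes minimizing fibers equivalent to minimizing $|T|$, i.e., to MSP, hence to set cover, and your handling of the $|T|\ge 2$ and $\tau^*=1$ degeneracies is sound. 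Your padding argument is more modular --- it shows MFSP inherits hardness directly from the path-counting problem, and since $\lfloor F^*/B\rfloor$ recovers $\max(\tau^*,2)$ exactly it would also transfer the set-cover inapproximability --- whereas the paper's gadget produces an instance that looks like a concretely routed network with bounded set sizes. Both constructions ultimately rest on the same realizability claim for an arbitrary path--fiber incidence matrix (the construction of~\cite{Kayi}) that the paper itself invokes in Appendix~\ref{MSP_proof}, so deferring that bookkeeping, as you do, matches the paper's own level of rigor; the one point worth spelling out is that the augmented incidence matrix (element fibers per the set-cover complement plus one private bundle per path) is itself just another incidence pattern, so~\cite{Kayi} can be applied to it directly rather than surgically modifying the original topology.
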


\begin{proof}
We provide a mapping from the Minimum 3-Set Cover problem, which is a special version of the Set Cover problem where each set has exactly 3 elements, to the MFSP problem. The Minimum 3-Set Cover problem is NP-hard, and holds all the inapproximability properties of the Minimum Set Cover problem.% Our mapping uses the physical topology shown in Fig. \ref{complexity-MFSP-fig}. 
\begin{figure}[ht]
	\begin{center}
	\includegraphics[scale=0.35]{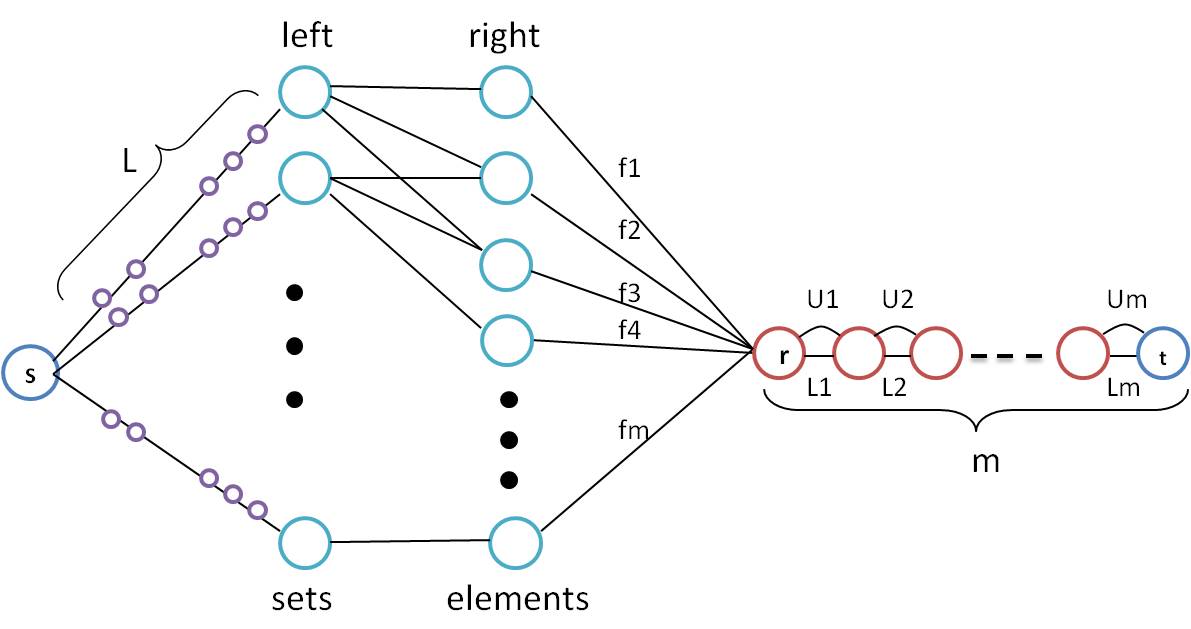}
	\end{center}
	\caption{Physical Topology}
	\label{complexity-MFSP-fig}
\end{figure}

Consider an instance of the Minimum Set Cover problem with the ground set $E$ and a family of subsets $F$. Suppose that each subset in $F$ contains only 3 elements. To show a mapping, we construct a physical topology as shown in Fig.~\ref{complexity-MFSP-fig}, such that each node on the left corresponds to a subset in $F=\{C_{1},...,C_{|F|}\}$ and the nodes on the right are the elements of $E=\{e_{1},...,e_{m}\}$. Node $j$ on the left is connected to node $i$ on the right if and only if $e_{i} \in C_{j}$. Note that a node on the left is connected to only three nodes on the right (i.e., each set contains only three elements).

We can construct a logical topology and its lightpath routing over the physical topology; such that for protection, we need to have $m$ paths from $s$ to $t$ that pass through all the nodes on the right. Moreover, since each path between $s$ and the nodes on the left uses a large number of fibers, we should select a survivable path set that uses the minimum number of nodes on the left. Consequently, the minimum fiber survivable path set for the aforementioned layered network gives a minimum set cover for the given instance of $E$ and $F$, which shows the NP-hardness of the MFSP problem. For the complete proof, see Appendix \ref{MFSP-complexity-prf-app}.
\end{proof}

Since the MFSP problem is a reduction from the minimum 3-set cover problem, it is unlikely that the MFSP problem has an efficient optimal algorithm. For this reason, we develop new heuristics and approximation algorithms. In particular, as in the previous section, we focus on the practical scenario where the number of paths on a fiber is at most $W$, i.e., the wavelength restricted setting. We first present a greedy algorithm, and then a randomized rounding algorithm based on the path-based formulation for MFSP.

\subsection{Additive Cost Greedy Algorithm (ACG)}\label{Greedy-in-MFSP}
Recall that the goal is to find a survivable path set that uses the minimum number of fibers. Hence, it is desired to select a path that uses a small number of fibers while surviving many new fibers (i.e., fibers not survived by already selected paths) as possible. Note that this is clearly different from the MSP problem where the number of fibers does not matter. The Additive Cost Greedy algorithm requires the set of paths and associated fibers as input. We define a new cost metric in order to take into account the two factors simultaneously. Let $C_j$ be the number of fibers used by path $j$. The ``amortized cost" $AC_j$ of path $j$, which is updated for every iteration, is defined as follows:
\begin{align}
AC_{j}=\frac{C_{j}}{\# \mbox{newly survived fibers by } P_{j}}, \nonumber
\end{align}
where the denominator is the number of fibers survived by path $j$ and not survived by the previously selected paths. Our greedy algorithm selects a path with minimum amortized cost, updates the amortized costs of the remaining paths, and continue until all the fibers are survived. This greedy algorithm, which we call the Additive Cost Greedy algorithm, gives an approximate solution.

\begin{theorem}\label{cost_greedy}
The Additive Cost Greedy algorithm provides an $O(W\log m)$ approximation to the MFSP problem.
\end{theorem}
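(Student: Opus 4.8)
The plan is to combine the classical analysis of the greedy set-cover algorithm with the observation that in the wavelength-restricted setting the fiber-cost of any path, and in particular of the optimal solution, is controlled by $W$. First I would set up the covering interpretation already used for the MSP problem: each path $j$ "covers" the set of fibers it survives, and the Additive Cost Greedy (ACG) algorithm is exactly the greedy algorithm for the weighted/min-cost set cover problem, where the cost of picking path $j$ is $C_j$, the number of fibers used by $j$. The standard greedy-for-min-cost-set-cover analysis then gives that ACG returns a solution of cost at most $H_m \cdot \mathrm{OPT}_{\mathrm{LP}} \le O(\log m)\cdot \mathrm{COST}^\star$, where $\mathrm{COST}^\star$ is the optimal \emph{additive} cost, i.e. $\sum_j C_j P_j$ over feasible (survivable) selections, and $H_m=\sum_{k=1}^m 1/k$.

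The key step, and the place where the wavelength restriction enters, is to relate this additive optimum $\mathrm{COST}^\star$ to the true MFSP objective $\OPT$ (the minimum number of \emph{distinct} fibers in a survivable path set). Take an optimal MFSP solution using $\OPT$ distinct fibers and, say, $p$ paths. Each path uses at most $\OPT$ fibers trivially, but more to the point: under the WDM restriction each fiber is used by at most $W$ paths, so summing the path-lengths $\sum_{j \text{ selected}} C_j$ counts each of the $\OPT$ distinct fibers at most $W$ times, giving $\sum_j C_j \le W\cdot \OPT$. Hence the same path set is a feasible solution of additive cost at most $W\,\OPT$, so $\mathrm{COST}^\star \le W\,\OPT$. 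Chaining the two bounds yields that ACG produces a survivable path set whose paths have total (additive) fiber count $O(\log m)\cdot W\,\OPT$, and since the number of \emph{distinct} fibers used is no larger than this additive count, the ACG solution uses at most $O(W\log m)\,\OPT$ fibers, which is the claimed bound.

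I expect the main obstacle to be making the greedy analysis airtight with the \emph{amortized} cost $AC_j = C_j / (\#\text{newly survived fibers})$ rather than simply invoking a black-box set-cover theorem: one has to run the usual charging argument in which, at the iteration where fiber $i$ first becomes survived, it is charged $AC_j$ for the chosen path $j$, and then argue that the total charge to the $k$-th newly covered fiber (in reverse order) is at most $\mathrm{COST}^\star / k$, because at every stage there exists a path in the optimal additive solution whose ratio (remaining cost)/(remaining newly-survived fibers) is at most $\mathrm{COST}^\star$ divided by the number of still-uncovered fibers. Summing these charges gives the $H_m \le O(\log m)$ factor. A secondary point to check carefully is that greedy terminates — it does, since each iteration survives at least one new fiber and there are only $m$ fibers — and that every path has $C_j \ge 1$ so the amortized costs are well defined. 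Once the charging argument is in place, the multiplication by the $W$ factor from the WDM bound is immediate, completing the proof.
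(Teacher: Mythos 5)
Your proposal is correct and follows essentially the same route as the paper's proof: the counting argument that $\sum_j C_j \le W\cdot\OPT$ for the optimal MFSP set is exactly the paper's Lemma on the path--fiber bipartite incidence bound, and chaining it with the Chv\'atal-style $O(\log m)$ guarantee for the additive-cost greedy is precisely how the paper concludes. The only difference is that you spell out the amortized-cost charging argument that the paper delegates to a citation.
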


Note that the number $C_j$ in the additive cost of path $j$ does not change over iterations. That is, the additive cost implicitly assumes that selecting path $j$ will add $C_j$ fibers to the total cost, while only the number of new fibers is added to the total cost. Therefore, one can better take into account the actual change to the cost by updating $C_j$ as the number of fibers that are used by path $j$ and not used by the previously selected paths. In Section \ref{simulation_sec}, we will show that this Non-additive Cost Greedy (NACG) algorithm works better than the ACG, by finding survivable path sets with fewer fibers.

\subsection{Randomized Rounding Algorithm}\label{RND_SEC}
Randomized rounding is a widely used technique to solve difficult integer optimization problems. In general, randomized rounding scheme solves the Linear Program (LP) relaxation of the original ILP formulation, and rounds the solution randomly. In our case, the LP relaxation of the MFSP problem is given as
\begin{align}
\mbox{LP relaxation:}\quad \mbox{minimize} \quad&\sum_{i=1}^m f_{i} &\\
\quad\quad\quad\mbox{subject to} \quad&A \times P \geq e & \label{A_P_e}\\
\quad\quad\quad&f_{i} \geq P_{j} \quad& \forall f_{i} \in P_{j} \label{path-fiber} \\
\quad\quad\quad&0 \leq P_{j} \leq 1. & \label{relaxed_const} 
\end{align}
Let $P_j^*$ and $f_i^*$ be the optimal values of path $j$ and fiber $i$. Note that the above path-based LP uses the set of paths and associated fibers as input. Our randomized rounding algorithm to solve the MFSP problem works as follows:
\begin{enumerate}
\item{Initialize $S=\emptyset$. Solve the relaxed problem.}
\item{Select each path $j$ with probability $P_j^*$, and add it to $S$ if selected.}
\item{Repeat step 2 for $T$ times}.
\end{enumerate}

Since paths are selected randomly, some fibers may not be survived in one iteration. Clearly, as the number of iterations $T$ increases, the probability of surviving all of the fibers increases. On the other hand, it may increase the number of selected paths and thus fibers. Therefore, the parameter $T$ determines the survivability probability and the approximation quality of the solution. The following theorem characterizes this relationship.

\begin{theorem}\label{randomized-rounding-thm}
With $T=O(\log \frac{m}{1-q})$ iterations, the randomized rounding algorithm gives an $O(W\log \frac{m}{1-q})$ approximation with probability at least $q$.
\end{theorem}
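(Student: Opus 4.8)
The plan is to run the standard analysis of randomized rounding for covering LPs, with the one extra ingredient that the ``width'' of our formulation is controlled by the wavelength bound $W$. Write $\OPT$ for the optimal MFSP value and $\OPT_{LP}=\sum_{i=1}^m f_i^*$ for the optimum of the LP relaxation; since the LP relaxes the ILP, $\OPT_{LP}\le\OPT$, so it is enough to bound the rounded cost against $\OPT_{LP}$. The structural fact I would use is that constraint~(\ref{path-fiber}) forces $f_i^*\ge\max_{j:\,i\in P_j}P_j^*$ for every fiber $i$, and since at most $W$ paths can use a single fiber (the wavelength restriction), this yields $\sum_{j:\,i\in P_j}P_j^*\le W\,f_i^*$ for each $i$.

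First I would handle survivability. Fix a fiber $i$; by the covering constraint~(\ref{A_P_e}) the paths that survive fiber $i$ carry LP mass $\sum_{j:\,a_{ij}=1}P_j^*\ge 1$, so in one rounding iteration fiber $i$ is left unsurvived with probability $\prod_{j:\,a_{ij}=1}(1-P_j^*)\le\exp(-\sum_{j:\,a_{ij}=1}P_j^*)\le e^{-1}$, using $1-x\le e^{-x}$. The $T$ iterations being independent, fiber $i$ is unsurvived after all of them with probability at most $e^{-T}$, and a union bound over the $m$ fibers shows the output is non-survivable with probability at most $m\,e^{-T}$. Choosing $T=\lceil\ln\frac{2m}{1-q}\rceil=O(\log\frac{m}{1-q})$ makes this at most $(1-q)/2$.

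Next, the cost. A fiber $i$ is paid for only if some path through it is selected in some iteration, so by a union bound over paths and iterations $\pr[\mbox{fiber }i\mbox{ used}]\le T\sum_{j:\,i\in P_j}P_j^*\le T\,W\,f_i^*$; summing over $i$ and using linearity of expectation gives $\E[\sum_{i=1}^m f_i]\le T\,W\sum_{i=1}^m f_i^*=T\,W\,\OPT_{LP}\le T\,W\,\OPT$. Markov's inequality then bounds the probability that the cost exceeds $\frac{2}{1-q}\,T\,W\,\OPT$ by $(1-q)/2$. Combining this with the survivability bound via one more union bound, with probability at least $1-(1-q)/2-(1-q)/2=q$ the algorithm returns a survivable set of total cost at most $\frac{2}{1-q}\,T\,W\,\OPT=O(W\log\frac{m}{1-q})\,\OPT$.

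The only delicate point — and where I expect the bookkeeping to concentrate — is that survivability and the cost guarantee are two distinct random events, so the failure budget $1-q$ must be partitioned between them, and the cost side then loses a Markov factor $\frac{1}{1-q}$; with $q$ bounded away from $1$ this is $O(1)$, and for $q\to1$ one can replace Markov by a sharper tail bound on $\sum_i f_i$ to absorb it into the constants. Everything else is a routine computation. I would also flag that $\OPT_{LP}\le\OPT$ is what makes the comparison legitimate, and that the per-fiber inequality $\sum_{j:\,i\in P_j}P_j^*\le W\,f_i^*$ is the precise place where the wavelength restriction enters — without it the approximation ratio would degrade from a $W$ factor to a $\mbox{(max number of paths on a fiber)}$ factor.
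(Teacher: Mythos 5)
Your proposal follows essentially the same route as the paper: the same per-fiber width bound $\sum_{j:\,i\in P_j}P_j^*\le W f_i^*$ derived from constraint~(\ref{path-fiber}) and the wavelength restriction, the same survivability argument via $1-x\le e^{-x}$, constraint~(\ref{A_P_e}), and a union bound over the $m$ fibers, and the same choice $T=O(\log\frac{m}{1-q})$. The one place you genuinely diverge is the final step: the paper stops at the bound $\E[\mbox{Total \# fibers}]\le W\log\frac{m}{1-q}\,LP^*$ and allocates the entire failure budget $1-q$ to the survivability event, implicitly treating the expected-cost bound as if it held deterministically; you instead split the budget between the survivability event and the cost event and apply Markov's inequality to the latter, which is the more careful reading and costs you only an extra $\frac{1}{1-q}$ factor (a constant for fixed $q<1$, as you note). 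So your write-up is, if anything, tighter in its probabilistic bookkeeping than the paper's; the only caveat is that your stated ratio $O(\frac{1}{1-q}W\log\frac{m}{1-q})$ matches the theorem's $O(W\log\frac{m}{1-q})$ only when $q$ is bounded away from $1$, a limitation you correctly flag along with the remedy (a sharper tail bound on $\sum_i f_i$). No gaps.
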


\begin{proof}
We first find an upper bound on the expected number of fibers selected in each iteration (which gives the approximation quality of the solution), and then, the probability of survivability is derived.

\subsubsection{Expected Number of Selected Fibers}
Note that fiber $i$ is selected if any of the paths using the fiber is added to the path set $S$. Moreover, in each iteration, each path $j$ is added with probability $P_j^*$. To count the number of selected fibers, define a random variable $F_i$ for each fiber $i$ such that $F_i=1$ if fiber $i$ is selected and $0$ otherwise. The expected number of fibers selected in each iteration can be written as
\begin{align}
E[\sum_{i=1}^{m} F_{i}]% \quad \quad \nonumber \\ 
=\sum_{i=1}^{m} \pr(F_{i}=1) = \sum_{i=1}^{m} (1-\pr(F_{i}=0)) \label{cost_iter}
\end{align} 

Therefore, we need to compute $\pr(F_{i}=0)$, which is the probability of a fiber not being selected. Note again that a fiber is not selected if none of the paths using the fiber are selected. It follows that
\begin{align}
\pr(F_{i}=0)% = \pr(\mbox{none of paths using} \  f_{i} \ \mbox{are selected}) \nonumber\\
&=\prod_{j:a_{ij}=0} (1-P_{j}^*)\\
&\geq \prod_{j:a_{ij}=0} (1-f_{i}^*) \quad\mbox{(by constraint (\ref{path-fiber}))} \label{bound_fibers}
\end{align}
where the equality is due to the independence of path selections. Let $w_{i}$ be the number of paths that use fiber $f_{i}$, i.e., $w_i=|\{j:a_{ij}=0\}|$. Then, we can obtain
\begin{equation}
\pr(F_{i}=0) \geq \prod_{j:a_{ij}=0} (1-f_{i}^*) = (1-f_{i}^*)^{w_{i}}, \label{product_fibers}
\end{equation}
\begin{align}
&\pr(F_{i}=1) \leq 1-(1-f_{i}^*)^{w_{i}}.
\end{align}

Finally, by using the fact that $1-(1-x)^n\leq nx$, the probability of selecting a fiber can be upper-bounded as 
\begin{align}
\pr(F_{i}=1) \leq w_{i}f_{i}^*.    \label{upperbound_select_fiber}
\end{align}
Combining (\ref{cost_iter}) and (\ref{upperbound_select_fiber}) yields the following bound on the expected number of fibers selected in each iteration:
\begin{align}
E[\sum\limits_{i=1}^{m} F_{i}]&\leq \sum\limits_{i=1}^{m} w_{i}f_{i}^*\nonumber\\
&\leq W \times \sum\limits_{i=1}^{m}f_{i}^* \quad\mbox{(by wavelength restriction)}\nonumber\\
&= W \times LP^*, \label{final_cost}
\end{align}
where $LP^*$ is the optimal value of the LP relaxation.

\subsubsection{Probability of Survivability}
Next, we derive an upper bound on the probability that the selected path set is \textit{not} survivable, by applying the idea of the feasibility argument in~\cite{Raghavan}. First, for each fiber $i$, the probability that the selected path set cannot survive the failure of fiber $i$ can be written as follows:
\begin{align}
&\pr(\mbox{fiber $i$ not survived in one iteration}) \\
&= \pr(\mbox{none of paths surviving fiber $i$ are picked}) \\
%&= \prod_{j:a_{ij}=1}\pr(P_{j} \  \mbox{not picked})\\
&= \prod_{j:a_{ij}=1} (1-P_{j}^*) \\
&\leq \prod_{j:a_{ij}=1} e^{-P_{j}^*} \quad\quad\quad\quad\quad \mbox{using} (1-x \leq e^{-x}) \\
&\leq e^{-\sum_{j:a_{ij}=1}P_{j}^*} \leq \frac{1}{e}. \quad\quad \mbox{(using constraint \ref{A_P_e}) }
\end{align}
Since the randomized rounding runs for $T$ iterations with $T=\log \frac{m}{1-q}$, we can obtain
\begin{equation}
\pr(f_{i} \ \mbox{not covered in all iterations}) \leq \frac{1}{e^{\log \frac{m}{1-q}}} = \frac{1-q}{m}.
\end{equation}
Thus, by the union bound,
\begin{equation}
\pr( \mbox{there exist an unsurvived fiber}) \leq m \times \frac{1-q}{m}= 1-q.
\end{equation}

\subsubsection{Approximation Result}

By (\ref{final_cost}), the total expected number of fibers after $T$ iterations is bounded as
\begin{equation}
E[\mbox{Total \# fibers}] \leq W \log{\frac{m}{1-q}} LP^*
\end{equation}
Since the solution is in integer form, it is an upperbound for the ILP solution. Thus, with probability at least $q$,
\begin{align}
\frac{E[\mbox{Total \# fibers}]}{ W \log{\frac{m}{1-q}} } \leq ILP \leq E[\mbox{Total \# fibers}].
\end{align}

\end{proof}

\subsection{Random-Sweep Greedy (RSG)} \label{RSGsec}

Next, we present a new Greedy algorithm for the MFSP problem, which is called the Random-Sweep greedy. Unlike the Greedy algorithm discussed in Section \ref{MSP_SEC}, the RSG removes a path (from the selected path set) which survives the fibers covered by other paths; so that the size of the path set can be further reduced while maintaining the survivability. Although we could not quantify the performance of this algorithm, it performs near optimally in some scenarios as will be shown in Section \ref{simulation_sec}.

The RSG algorithm also requires the knowledge of the set of paths and associated fibers. Let $S_j$ be the set of fibers that are survived by path $j$. Moreover, let the cost $C_j$ of path $j$ in each iteration be the number of fibers that are used by path $j$, and not used by the previously selected paths. Using the cost function $C_j$, define the amortized cost $AC_j$ as the ratio of $C_j$ to the number of newly survived fibers by path $j$. The first two iterations of RSG are the same as the Non-Additive Cost greedy algorithm. That is, in each iteration, it selects a path with minimum amortized cost. If the first two paths survive all of the fibers, the algorithm terminates. Otherwise, it continues as follows.

Suppose the RSG algorithm is in the $i^{th}$ iteration. First, find a path, say $i$, with minimum amortized cost among the remaining paths. Then, pick a path, say $j$, randomly from the previously selected paths and find $S_j \cup S_i$, which is the set of fibers that are survived by either path $i$ or path $j$. If there exists a path $k$ among the previously selected paths such that $S_k \subset S_i \cup \S_j$, remove path $k$ from the selected paths. Note that removing such a path does not affect the survivability of the selected path set, i.e., the same set of fibers are still survived after the removal. More importantly, we can possibly decrease the number of fibers used by the selected paths.

Table \ref{MFSP_table} shows the summary of our algorithms for the MFSP problem. Note that we have also developed an $\varepsilon$-net algorithm and its details can be found in~\ref{epsnet_MFSP}.

\begin{table}\label{MFSP_table}
\centering
\begin{tabular}{|c|c|c|c|}
\hline
Method & Approximation & Running Time & T\\
\hline
ExS & Exact & $O(W^{W+1} m^{W+1})$ & D\\
\hline
ACG& $W \log m$ & $O(W^2 m)$ & D\\
\hline
$\varepsilon$-net & $W \log W \log \OPT$ & $O(W \log (W) \log (m) \log(\log (W)) )$ & P\\
\hline
RR & $W \log{\frac{m}{1-p}}$ & $\log{\frac{m}{1-p}}$ & P\\
\hline
RSG & nearly Opt. & $O(W^2 m)$ & D\\
\hline
\end{tabular}
\caption{Approximation bounds under wavelength restricted version of MFSP: ExS-Exhaustive Search, RSG-Random Sweep Greedy, ACG-Additive Cost Greedy, RR-Randomized Rounding, T-Type, D-Deterministic, P-Probabilistic}
\label{MFSP_table}
\end{table}

\section{Simulation Results}\label{simulation_sec}
We compare the performance of our algorithms using both large-scale random network topologies, as well as the US backbone network topology. In particular, we compare the following algorithms:
\begin{itemize}
\item ILP-based optimal algorithm computed by CPLEX; denoted by ILP
\item Simple Greedy algorithm from Section \ref{MSPG-SEC}; denoted by MSPG
\item Additive Cost Greedy algorithm from Section \ref{Greedy-in-MFSP}; denoted by ACG
\item Non-additive Cost Greedy algorithm from Section \ref{Greedy-in-MFSP}; denoted by NACG
\item Random-Sweep Greedy algorithm from Section \ref{RSGsec}; denoted by RSG
\item Randomized rounding algorithm from Section \ref{RND_SEC}; denoted by RR
\item $\varepsilon$-net algorithm from Section \ref{epsilon-net_description_section}; denoted by EPS
\end{itemize}

\subsection{Performance in Large-scale Random Topologies}
We first consider a random layered network where the logical topology consists of 50 paths between nodes $s$ and $t$. This layer is mapped onto the physical topology containing 100 fibers, using the mapping structure shown in~\cite{Kayi}. In the wavelength restricted version of the problem, at most $W$ paths can be assigned to each fiber. For each value of $W$, we generate 1000 random topologies each with 50 paths that are randomly routed on the physical topology. We then apply our algorithms to each network in order to find a survivable path set using the minimum number of fibers (i.e., to solve the MFSP problem). Note that for Randomized Rounding the performance depends on the survivability guarantee of the algorithm, which is 99.9\% for the results shown below.

\begin{figure}[ht]
\centering
\includegraphics[scale=0.055]{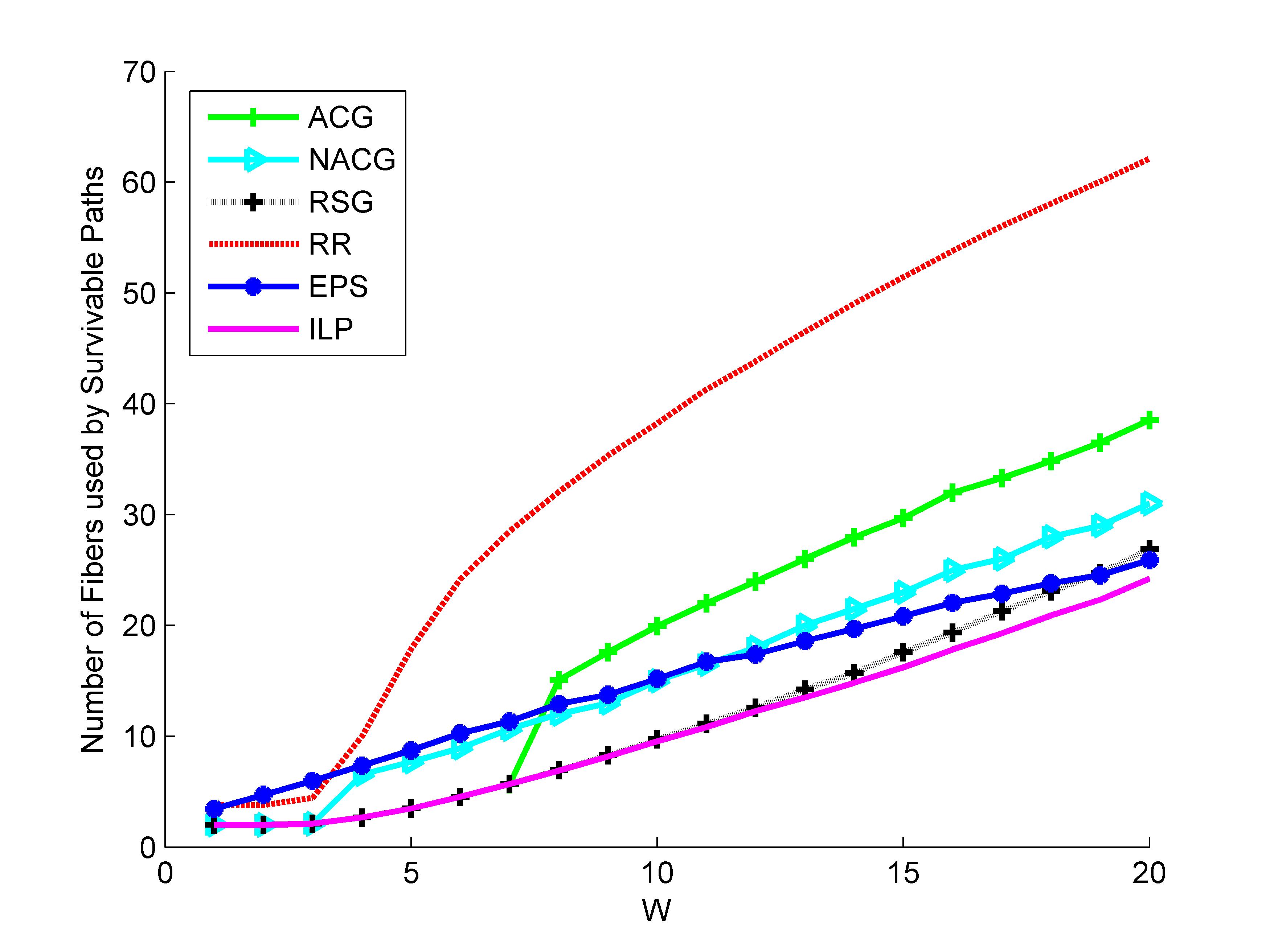}\vspace{-0.3cm}
\caption{Comparison of algorithms for MFSP problem: Approximation quality in random networks}
\label{Compare-algs}\vspace{-0.2cm}
\end{figure}

Fig. \ref{Compare-algs} compares the average number of fibers in the survivable path set found by each algorithm. It can be seen that as the value of $W$ increases, the number of used fibers increases. This is due to the fact that when $W$ is large, more logical paths can share a fiber, and therefore more logical paths are needed since a single physical link failure can lead to a large number of logical path failures. Note that the Random-Sweep Greedy (RSG) algorithm is nearly optimal, and the performance of $\varepsilon$-net algorithm is better than RSG for large values of $W$. Fig. \ref{Compare-RunTime} compares the logarithm of the running time of the algorithms. It can be seen that the Randomized Rounding algorithm is the fastest, while the RSG algorithm which gives the closest to optimal solution, and the $\varepsilon$-net algorithm which performs nearly optimally for networks with large values of $W$, have larger running times. Note also that the running times are nearly independent of $W$ for all of the proposed algorithms. In contrast, obtaining the exact optimal solution using the ILP formulation becomes quickly impractical as $W$ increases.

\begin{figure}[ht]
\centering
\includegraphics[scale=0.055]{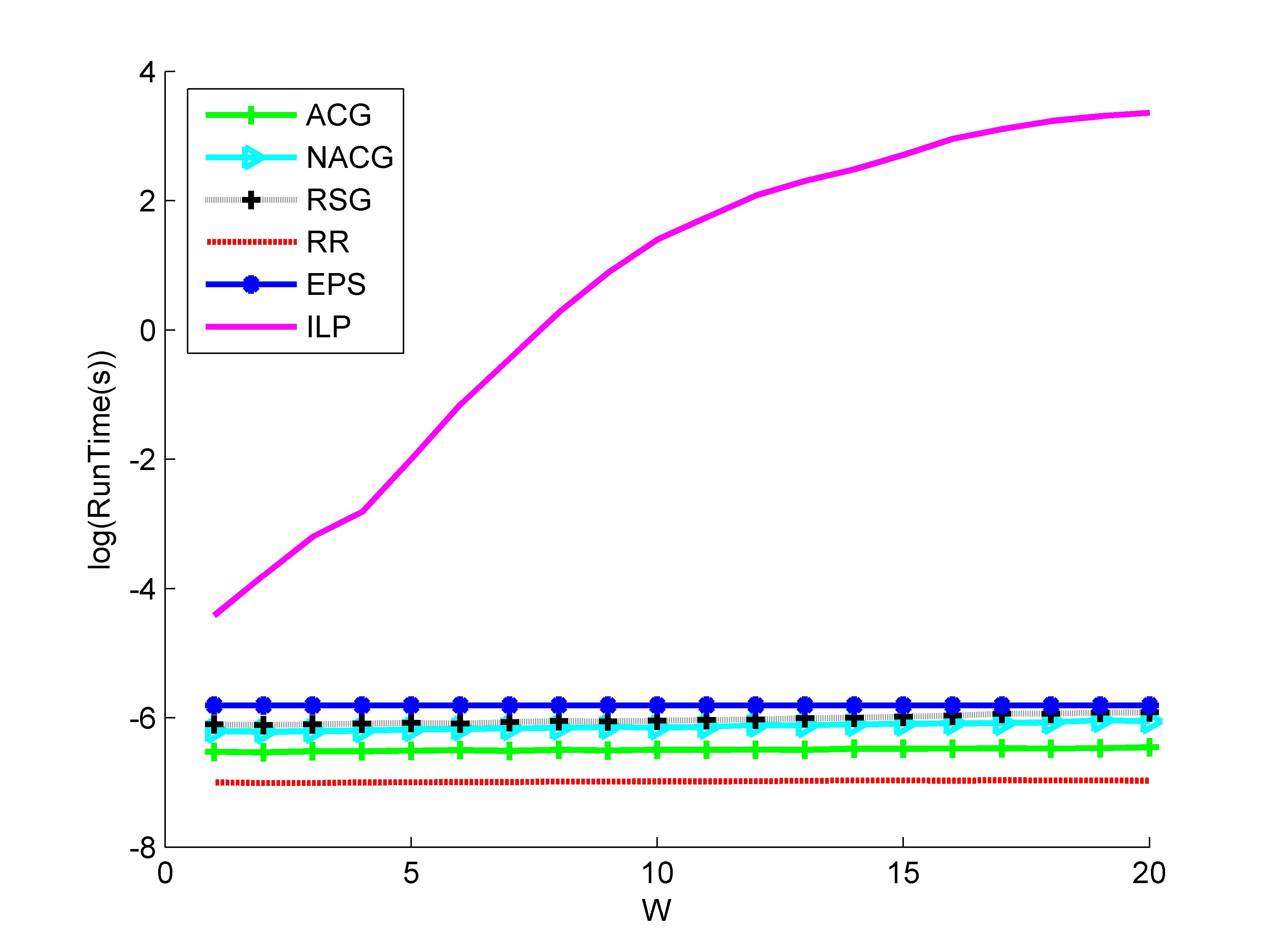}\vspace{-0.3cm}
\caption{Run Time Comparison of Different heuristics with respect to Optimal}
\label{Compare-RunTime}\vspace{-0.2cm}
\end{figure}

Next, we consider larger networks where there are 1000 fibers in the physical topology and 500 paths in the logical topology, with $W$ ranging from 1 to 40. Fig. \ref{MFSP_largeW} shows the performance of the various algorithms as a function of $W$. The performance of the ILP-based algorithm is omitted since CPLEX often fails to find a solution within a reasonable amount of time. Again we see that the RSG algorithm considerably outperforms the rest of algorithms.

\begin{figure}[ht]
\centering
\includegraphics[scale=0.055]{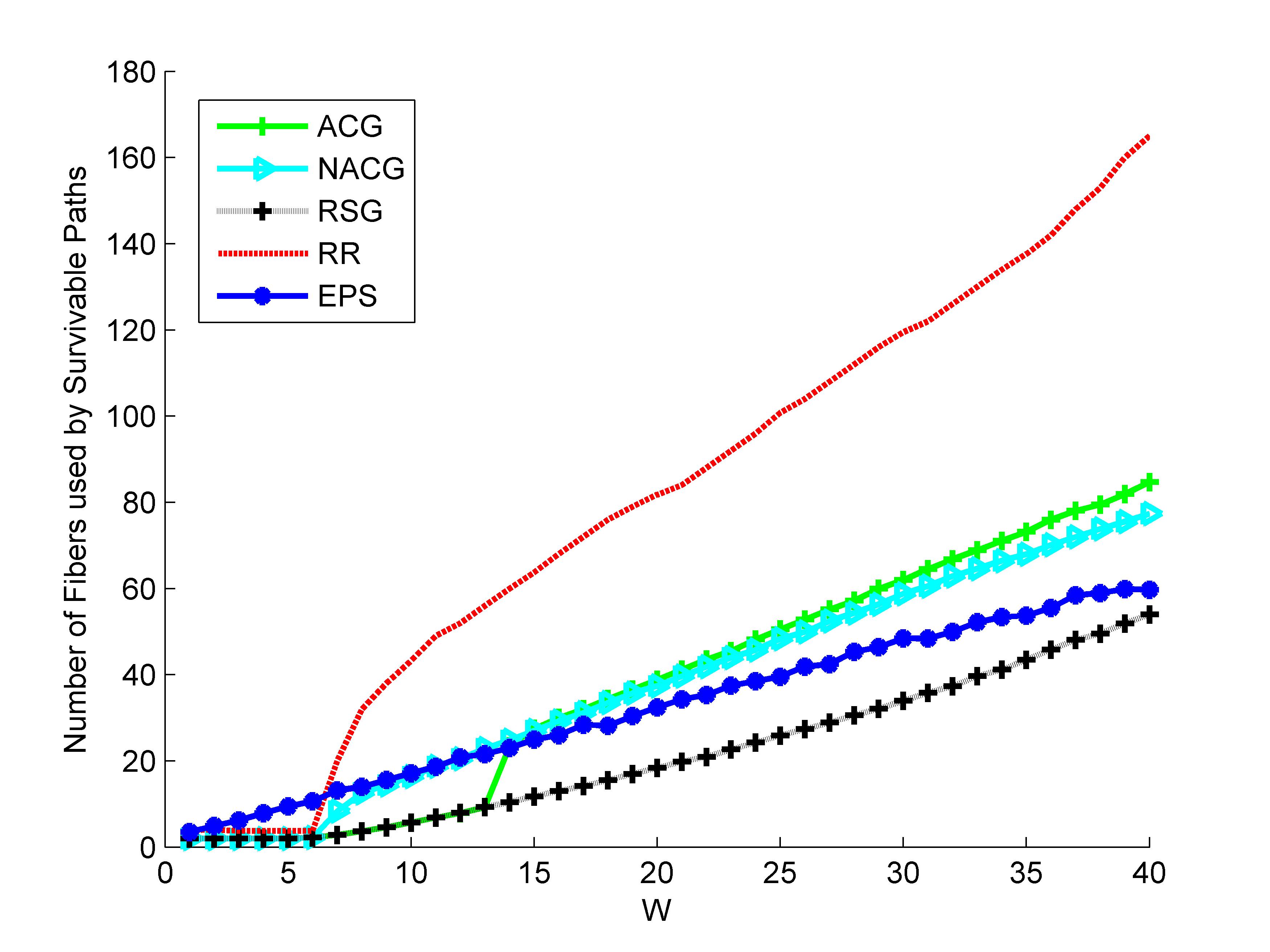}\vspace{-0.4cm}
\caption{Comparison of Approximation Algorithms in Large Networks}
\label{MFSP_largeW}\vspace{-0.2cm}
\end{figure}

\subsection{Performance in Real Networks}

\begin{figure}[ht]
\centering
\includegraphics[scale=0.3]{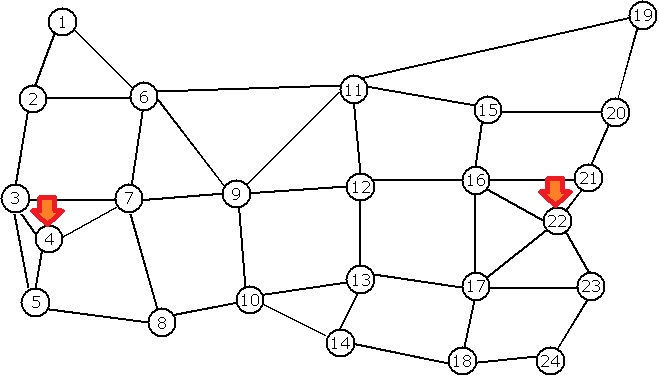}
\caption{Physical Topology}
\label{IP}
\end{figure}

Next, we examine the performance of the approximation algorithms over the US backbone topology shown in Fig. \ref{IP}, with the objective of finding a minimum survivable path set between nodes 4 and 22~\cite{IPbackbone}. For the logical topology, we generated random graphs with eight nodes (including 4 and 22) each of degree 4. We use shortest path lightpath routing for the logical links.

Table \ref{realnetwork} shows the average number of paths and average running time of each algorithm. It can be seen that the RSG and randomized rounding algorithms are nearly optimal, and furthermore, the randomized rounding gives a solution almost instantly. We also note that the survivability guarantee of the Randomized Rounding algorithm is 99\% for the results shown in the table.

\begin{table} 
\centering
\begin{tabular}{|c|c|c|}
\hline
Method & Number of Paths & Running Time (ms)\\
\hline
ILP & 2.0069 & 7.2133\\
\hline
RSG & 2.0160 & 2.0167\\
\hline
RR & 2.0482 & 0.0272\\
\hline
MSPG & 2.2241 & 0.1911\\
\hline
EPS & 2.551 & 1.6000\\
\hline       
\end{tabular}
\caption{Comparison of Algorithms for MSP in Real Networks}
\label{realnetwork}\vspace{-0.3cm}
\end{table}

\section{Conclusion}
We considered the problem of finding survivable paths in layered networks. The traditional disjoint paths approach for protection cannot be directly applied to layered networks, since physically disjoint paths may not always exist in such networks. To address this issue, we introduced the new notion of \emph{survivable path set}. We showed that in general the problem of finding the minimum size survivable path set (MSP) and the problem of finding the minimum fiber survivable path set (MFSP) are NP-hard and inapproximable. However, under practical constraints, we are able to develop both optimal and approximation algorithms for the MSP and MFSP problems. An important future direction is to develop backup routing schemes based on survivable path sets.

\bibliographystyle{IEEEtran}
\bibliography{reference}

\clearpage

\appendices

\section{Proof of Complexity of MSP}\label{MSP_proof}
The Minimum survivable paths problem can be reduced from the NP-hard Minimum Set Cover problem.
Given an instance of Minimum Set Cover Problem with ground set $E$ and family of subsets $R$, we construct a physical topology $E=\{f_1,\cdots,f_m\}$ containing all $m$ fibers and a logical topology $R=\{P_1,\cdots,P_n\}$, where each $P_j$ corresponds to the set of fibers that survive in the failure of path $j$, i.e. all fibers that are \textit{not} used by path $j$. It follows that the minimum number of logical paths that survives all the physical fibers is equal to the size of a minimum set cover.
As the last step of proof, we need to show we can construct a physical topology with the given routing.
Given the set of paths and the fibers used by each path (complement of fibers survived by each path), we can use the physical topology in~\cite{Kayi}.
The inapproximability result follows immediately from the inapproximabilities of the Minimum Set Cover problem.\\

%%%%%%%%%%%%%%%%%%%%%%%%%%%%%%%%%%%%%%%%%%%%%%%%%%%%%%%%%%%%%%%%
%%%%%%%%%%%%%%%%%%%%%%%%%%%%%%%%%%%%%%%%%%%%%%%%%%%%%%%%%%%%%%%%

\section{Proof of Theorem \ref{eps-net-thm-Krestricted}}\label{VC-dim}

In the procedure of $\varepsilon$-net algorithm, the ``path-selection" algorithm will be applied iteratively, and checks the survivability of the selected path set after each iteration. If not all fibers are survived, the algorithm doubles the weight of all paths that survive the failure of fibers in $\bar S$, where $\bar S$ is the set all the fibers that are not survived yet, and repeat the random path selection.

Let $\OPT$ be the optimal solution of MSP. Based on the results in \cite{HittingSet,Bronniman}, it can be shown that if in each iteration the selected subset of paths survive a ``good" subset of fibers, in $O(\OPT \log(\frac{m}{\OPT}))$ iterations, the algorithm will return a set of survivable paths, with high probability. A subset is ``good" if it is an $\varepsilon$-net.

\begin{definition}
Consider a set system $F = (X,R)$, where $X$ is the set of elements and $R$ is the set of subsets of $X$. A set $H \subset X$ is an ``\epsnet" of $F$ if $S \cap H \neq \emptyset $, for every subset $S \in R$ for which $|S| \geq \varepsilon |X|$.
\end{definition}

Lemma \ref{Finder-THM} claims that it is guaranteed that in each iteration the selected paths survive a ``good" subset of fibers

\begin{lemma}\label{Finder-THM}
For all $\varepsilon \in (0,\frac{1}{2})$, if $s = c\frac{\log K}{\varepsilon} \log \frac{\log K}{\varepsilon}$, where $c$ is a constant, the path-selection algorithm selects a subset of paths that survives all of the \RISKY fibers with high probability.
\end{lemma}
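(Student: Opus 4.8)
The plan is to recognize Lemma \ref{Finder-THM} as an instance of the classical $\varepsilon$-net theorem for set systems of bounded VC-dimension, specialized to the set system that arises from our path/fiber structure. First I would set up the set system $F = (X, R)$ properly: the ground set $X$ is the collection of $n$ paths (weighted by $W_j$), and for each \RISKY fiber $f_i$ we form the subset $S_i = \{j : a_{ij} = 1\}$ of paths that survive $f_i$. By definition of \RISKY, $W(f_i) = \sum_{j \in S_i} W_j \geq \varepsilon \sum_{j=1}^n W_j$, so $S_i$ is a ``heavy'' set in the weighted sense. The path-selection algorithm draws $s$ paths i.i.d.\ according to $\mu(P_j) = W_j / \sum_j W_j$; a drawn sample $H$ ``survives'' fiber $f_i$ precisely when $H \cap S_i \neq \emptyset$. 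So surviving all \RISKY fibers is exactly the statement that $H$ is an $\varepsilon$-net of $F$ (restricted to the heavy sets), and the $\varepsilon$-net theorem of Haussler--Welzl (used via~\cite{HittingSet,Bronniman}) guarantees that a random sample of size $O(\frac{d}{\varepsilon}\log\frac{d}{\varepsilon})$ is an $\varepsilon$-net with high probability, where $d$ is the VC-dimension of $F$.

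The crux of the argument, and the step I expect to be the main obstacle, is bounding the VC-dimension of this set system by $O(\log K)$ so that the sample size $s = c\frac{\log K}{\varepsilon}\log\frac{\log K}{\varepsilon}$ stated in the lemma is the correct one. Here I would invoke Lemma~\ref{K-OPT} and Lemma~\ref{K-paths}: under the path-length restriction, the total number of relevant fibers effectively behaves like $O(K)$ in the sense that any path survives all but at most $K$ fibers, and more to the point, the number of \emph{distinct} subsets $S_i$ that can be ``shattered'' is controlled. The dual set system (paths as ground set, fibers as ranges) has VC-dimension at most $\log_2(\#\text{distinct ranges})$; since each fiber is missed by at most $K+1$ paths in any minimal survivable configuration and the number of distinct co-path patterns relevant to the cover is polynomially bounded in $K$ (via Lemma~\ref{K-paths}'s $O(m^K)$ enumeration collapsing on the $K$-restricted structure), one gets $d = O(\log K)$. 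Filling in this VC-dimension estimate rigorously — i.e.\ explaining why it is $\log K$ and not $\log m$ — is the part that requires genuine care and is where the path-length restriction is essential.

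Once the VC-dimension bound $d = O(\log K)$ is in hand, the rest is a direct citation. I would state that by the $\varepsilon$-net theorem (as formulated in~\cite{Varadarajan,Matousek,HittingSet,Bronniman}), a sample of $s = c\frac{d}{\varepsilon}\log\frac{d}{\varepsilon} = c'\frac{\log K}{\varepsilon}\log\frac{\log K}{\varepsilon}$ independent draws from $\mu$ yields an $\varepsilon$-net with probability at least, say, $1 - 1/\mathrm{poly}(m)$. Translating back: with high probability the selected path set intersects every heavy range $S_i$, i.e.\ survives every \RISKY fiber. This is exactly the claim of Lemma~\ref{Finder-THM}, valid for all $\varepsilon \in (0, \tfrac12)$ as stated. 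I would close by noting that this lemma is the per-iteration guarantee feeding into the outer reweighting loop, which (by the standard analysis of the Brönnimann--Goodrich iterative reweighting scheme) terminates in $O(\OPT\log\frac{m}{\OPT})$ iterations and produces a survivable path set of size $O(\log K \log \OPT)\OPT$, thereby yielding Theorem~\ref{eps-net-thm-Krestricted}.
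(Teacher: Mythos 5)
Your overall architecture matches the paper's: take the paths as the ground set, take for each fiber the range consisting of the paths that survive it, observe that a fiber being \RISKY means exactly that its range has weight at least $\varepsilon$ times the total, and invoke the weighted Haussler--Welzl $\varepsilon$-net sampling theorem so that the stated sample size $s = c\frac{\log K}{\varepsilon}\log\frac{\log K}{\varepsilon}$ reduces entirely to showing that the VC-dimension $d$ of this set system is $O(\log K)$. You correctly identify that VC-dimension bound as the crux. The problem is that the route you sketch for it does not work. The inequality $d \leq \log_2(\#\text{distinct ranges})$ only yields $d \leq \log_2 m$, because the ranges are indexed by the $m$ fibers; nothing in the path-length restriction makes the number of distinct fiber-ranges polynomial in $K$, and Lemma~\ref{K-paths} bounds the number of \emph{paths} by $O(m^K)$, which is polynomial in $m$, not in $K$. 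Moreover, your assertion that ``each fiber is missed by at most $K+1$ paths'' conflates the path-length restriction (each path uses at most $K$ fibers) with the wavelength restriction (each fiber carries at most $W$ paths); it is not implied by the hypotheses of this section, and in any case it is not the quantity the shattering argument needs.

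The argument the paper actually uses (Lemma~\ref{VC-dim-K}) is local to a single path and is what delivers $\log K$ rather than $\log m$: suppose $A$ is a shattered set of paths with $|A| = d$, and fix any $P_j \in A$. Of the $2^{d}$ subsets of $A$, exactly $2^{d-1}$ exclude $P_j$, and each of these must be realized as $A \cap f_i$ for some fiber $i$ whose range does not contain $P_j$ --- that is, a fiber that $P_j$ fails to survive, hence a fiber used by $P_j$. Distinct subsets require distinct such fibers, and $P_j$ uses at most $K$ fibers by the path-length restriction, so $2^{d-1} \leq K$, giving $d \leq 1 + \log_2 K$. This single counting step is what your proposal is missing; once it is in place, the remainder of your write-up (the identification of ``surviving all \RISKY fibers'' with being an $\varepsilon$-net, and the citation of the sampling theorem with $d = O(\log K)$) coincides with the paper's proof, and your closing remarks about the outer reweighting loop correctly describe how the lemma feeds into Theorem~\ref{eps-net-thm-Krestricted}.
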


For the proof of Lemma \ref{Finder-THM}, we use the new techniques found by Haussler and Welzl in~\cite{Haussler}. Theorem \ref{thm-epsilon-net} is an improvement on their work~\cite{Matousek,HittingSet}.

Before presenting Theorem \ref{thm-epsilon-net}, we need to define VC-dimension.

\begin{definition}
Let $R$ be a set system on a set $X$. Let us say that a subset $A \subset X$ is shattered by $R$ if each of the subsets of $A$ can be obtained as the intersection of some $S \in R$ with $A$, i.e. if $R|_{A}=2^{A}$.

Define the VC-dimension of $R$, denoted by $dim(R)$, as the supremum of the sizes of all finite shattered subsets of $X$. If arbitrarily large subsets can be shattered, the VC-dimension is $\infty$.
\end{definition}

\begin{theorem}\label{thm-epsilon-net}
Let $F = (X,R)$ denote a set system with weights $w(u)$. For every $\varepsilon \in (0,\frac{1}{2})$, a random sample of $X$ according to the probability distribution $w(u)=w(X)$ is likely to to be an \epsnet  with respect to $w(u)$, if the sample contains $O(\frac{d}{\varepsilon} \log (\frac{d}{\varepsilon}))$ elements, where $d$ is the VC-dimension of the set system.
\end{theorem}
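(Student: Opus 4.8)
The plan is to prove this by the classical double-sampling (symmetrization) argument of Haussler and Welzl, adapted to the weighted setting; Theorem~\ref{thm-epsilon-net} is essentially the weighted $\varepsilon$-net theorem, so the proof follows that template. Write $s$ for the sample size and $N$ for the random multiset of $s$ elements of $X$ drawn i.i.d.\ according to $\mu(u)=w(u)/w(X)$. Call a set $S\in R$ \emph{heavy} if $w(S)\geq \varepsilon\, w(X)$; the sample $N$ fails to be an \epsnet exactly when some heavy $S$ is missed entirely by $N$. Let $E_1$ be this failure event. The goal is to bound $\pr(E_1)$ by a small constant for a suitable $s$.

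First I would introduce a second independent sample $M$ of $s$ elements drawn from $\mu$, and let $E_2$ be the event that there exists a heavy $S$ with $S\cap N=\emptyset$ but $|S\cap M|\geq \varepsilon s/2$ (multiplicities counted in $M$). Conditioned on $E_1$ together with a fixed witnessing heavy set $S$, the quantity $|S\cap M|$ is a sum of $s$ independent indicators each of probability $w(S)/w(X)\geq\varepsilon$, so it has mean at least $\varepsilon s$ and, by a Chernoff/Markov estimate, exceeds $\varepsilon s/2$ with probability at least $\tfrac12$ once $s$ is a sufficiently large multiple of $1/\varepsilon$. Hence $\pr(E_2)\geq\tfrac12\,\pr(E_1)$, and it suffices to bound $\pr(E_2)$.

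Next I would bound $\pr(E_2)$ by symmetrization: realize the pair $(N,M)$ by first drawing the $2s$ points of the combined multiset $A=N\uplus M$ from $\mu$, then choosing a uniformly random balanced partition of the $2s$ positions into an ``$N$-half'' and an ``$M$-half''; exchangeability of i.i.d.\ draws makes this the correct joint law. The event $E_2$ depends only on $A$ and the partition, and for it to occur some trace $S\cap A$ with $S$ heavy must have all of its $k\geq\varepsilon s/2$ positions fall in the $M$-half and none in the $N$-half; for a fixed such trace this has probability $\binom{2s-k}{s-k}/\binom{2s}{s}\leq 2^{-k}\leq 2^{-\varepsilon s/2}$ over the random partition. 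By the Sauer--Shelah lemma, the number of distinct traces $\{S\cap A:S\in R\}$ is $O((2s)^{d})$, since $A$ has at most $2s$ distinct underlying points and $\dim(R)=d$. A union bound then yields $\pr(E_2)\leq O\big((2s)^{d}\big)\,2^{-\varepsilon s/2}$, hence $\pr(E_1)\leq O\big((2s)^{d}\big)\,2^{-\varepsilon s/2+1}$.

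Finally, substituting $s=c\,\tfrac{d}{\varepsilon}\log\tfrac{d}{\varepsilon}$ makes $(2s)^{d}2^{-\varepsilon s/2}$ smaller than any prescribed constant for $c$ large enough, because the $\varepsilon s/2 = \tfrac{c}{2}\,d\log\tfrac{d}{\varepsilon}$ term dominates $d\log(2s)=O\!\big(d\log\tfrac{d}{\varepsilon}\big)$; this proves the sample is an \epsnet with high probability. I expect the main obstacle to be the bookkeeping in the weighted/multiset setting: making precise that drawing $2s$ points from $\mu$ and then partitioning has the same law as two independent $\mu$-samples, that repeated draws are handled consistently in the counts $|S\cap N|$ and $|S\cap M|$, and that Sauer--Shelah still applies because only the number of \emph{distinct} sampled points (at most $2s$) controls the number of traces. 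The probabilistic estimates themselves --- the Chernoff step giving $\pr(E_2)\ge\tfrac12\pr(E_1)$, the $2^{-k}$ partition bound, and the final choice of $c$ --- are routine once this framework is in place.
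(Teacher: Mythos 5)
Your proof is correct, but it is worth pointing out that the paper itself does not prove Theorem~\ref{thm-epsilon-net} at all: it is quoted as a known result (the weighted $\varepsilon$-net theorem of Haussler--Welzl, as refined in the cited references), and Appendix~\ref{VC-dim} uses it purely as a black box, with the paper's actual work going into the problem-specific step of bounding the VC-dimension (Lemma~\ref{VC-dim-K}). What you have supplied is the standard double-sampling/symmetrization proof of that imported result, and it is the right argument: the definition of a heavy set via $w(S)\geq\varepsilon\,w(X)$, the ghost sample $M$ with the Chernoff step giving $\pr(E_2)\geq\tfrac12\pr(E_1)$ once $s=\Omega(1/\varepsilon)$, the $2^{-k}$ bound over random balanced partitions, the Sauer--Shelah bound $O((2s)^d)$ on the number of traces, and the final substitution $s=c\tfrac{d}{\varepsilon}\log\tfrac{d}{\varepsilon}$ all fit together as you describe. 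Two small remarks. First, you correctly read the garbled distribution ``$w(u)=w(X)$'' in the statement as $\mu(u)=w(u)/w(X)$; that is indeed what is intended. Second, the one place where your sketch glosses over a genuine (if standard) subtlety is the conditioning ``on $E_1$ together with a fixed witnessing heavy set $S$'': since the witness depends on $N$, one should fix a canonical choice $S=S(N)$ measurable in $N$ and then use that $|S(N)\cap M|$ is conditionally binomial given $N$; you flag this under ``bookkeeping'' and it is exactly the point that needs care, but it does not affect correctness. The weighted setting introduces no further difficulty because both the heaviness threshold and the sampling distribution are measured by the same $\mu$-mass, so every estimate goes through verbatim.
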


To prove Lemma \ref{Finder-THM}, it is enough to show that $O(\frac{\log K}{\varepsilon} \log (\frac{\log K}{\varepsilon}))$ paths are needed to cover all \RISKY fibers.

Let $X=\{P_1,\cdots,P_n\}$ be the set of paths in our problem, and $R=\{f_1,\cdots,f_m\}$ be the set of subsets of $X$, where each fiber-set $f_i$ corresponds to the set of paths that survives the failure of fiber $i$. Therefore, an \epsnet will cover all \RISKY fibers. 

In this setting, a subset $A$ of paths is shatterd if intersection of $A$ with every fiber-set produces all the subsets of $A$. For instance, $A=\{P_{1},P_{2}, P_{3}\}$ is shattered by $R$ if there exist 8 fibers $f_{1},f_{2},f_{3},f_{4},f_{5},f_{6},f_{7},f_{8}$ such that $\{A\cap f_{1}=P_{1}, A\cap f_{2}=P_{2}, A\cap f_{3}=P_{3}, A \cap f_{4} = \{P_{1},P_{2}\}, A \cap f_{5} = \{P_{1},P_{3}\}, A \cap f_{6} = \{P_{2},P_{3}\}, A \cap f_{7} = A, A \cap f_{8} = \emptyset\}$.

\begin{lemma}\label{VC-dim-K}
In path length restricted version of MSP, VC-dimension $d$ is less than $\log K$.
\end{lemma}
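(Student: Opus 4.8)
The plan is to bound, directly and combinatorially, the size of any subset $A \subseteq X = \{P_1,\dots,P_n\}$ that can be shattered by $R$, using nothing about the network beyond the path-length restriction. Recall the dual set system: each fiber-set $f_i \in R$ is the set of paths that survive (do not use) fiber $i$. By the definition of shattering, $A$ is shattered iff for every subset $B \subseteq A$ there is a fiber $i$ with $A \cap f_i = B$; writing $C := A \setminus B$ for the paths of $A$ that \emph{use} fiber $i$, this says: for every $C \subseteq A$ there is a fiber $f_{i(C)}$ whose set of users within $A$ is exactly $C$.

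First I would note that the fibers $f_{i(C)}$, as $C$ ranges over the $2^{|A|}$ subsets of $A$, are pairwise distinct, since a single fiber $f_i$ has one intersection $A \cap f_i$ and that intersection determines $C = A \setminus (A \cap f_i)$. Next, fix any single path $P_j \in A$ and look only at the $2^{|A|-1}$ subsets $C \subseteq A$ with $P_j \in C$: for each such $C$, the path $P_j$ uses fiber $f_{i(C)}$, and these fibers are distinct. Hence $P_j$ uses at least $2^{|A|-1}$ distinct fibers. The path-length restriction says $P_j$ uses at most $K$ fibers, so $2^{|A|-1} \le K$, i.e. $|A| \le 1 + \log_2 K$. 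Taking the supremum over all shattered sets yields $d \le 1 + \log_2 K$, so in particular $d = O(\log K)$ --- which is exactly the order needed for Theorem~\ref{thm-epsilon-net} to give the sample size $O(\tfrac{\log K}{\varepsilon}\log\tfrac{\log K}{\varepsilon})$ used in Lemma~\ref{Finder-THM} and Theorem~\ref{eps-net-thm-Krestricted}.

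I expect the only real obstacle to be getting the combinatorial translation exactly right: one must be careful that ``shattered'' in this \emph{dual} system means every used/not-used pattern on $A$ is realized by some fiber (in particular $C = A$ is allowed, forcing a fiber used by all of $A$), and that the map $C \mapsto f_{i(C)}$ is injective so that the counting is legitimate. Once that is pinned down the argument is a single counting line, and no topological structure of the layered network is needed. A minor point is reconciling the clean bound $1 + \log_2 K$ with the statement's ``$<\log K$'': the statement should be read up to the additive constant, i.e. as $d = O(\log K)$.
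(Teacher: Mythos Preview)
Your proposal is correct and follows essentially the same approach as the paper: fix a path $P_j$ in a shattered set $A$, observe that $2^{|A|-1}$ of the realized patterns have $P_j$ on the ``uses'' side, hence $P_j$ uses at least $2^{|A|-1}$ distinct fibers, and bound this by $K$ to get $|A|\le 1+\log_2 K$. The paper phrases the count in terms of the survivor sets $B=A\cap f_i$ rather than your complement $C=A\setminus B$, and it records an additional (superfluous) inequality $2^{d-1}\le m-K$, but the core counting argument and the final bound $d\le 1+\log K$ are identical; your explicit remark on the injectivity of $C\mapsto f_{i(C)}$ is a welcome clarification the paper leaves implicit.
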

\begin{proof}
Suppose VC-dimension is $d$. Then, by definition, there exist a subset of paths $A$ of size $d$ which intersection of $A$ with all fiber-sets generates all subsets of $A$. In particular, for every $P_{j} \in A$, half of the subsets created by $A \cap R$ should contain $P_{j}$ and the other half should not contain it. 

Under the path length restricted assumption, each path uses at most $K$ fibers, and survives at least $m-K$ remaining fibers. Therefore, at least $m-K$ fibers contain a particular path $j$. Thus, 
\begin{equation}\label{d1K}
2^{d-1} \leq m-K \mbox{and} d \leq 1+\log (m-K).
\end{equation}

On the other hand, for each $P_{j}$ at most $K$ fibers do not contain it. Hence, 
\begin{equation}\label{d2K}
2^{d-1} \leq K \mbox{and} d \leq 1+\log K.
\end{equation}

By combining both equations \ref{d1K} and \ref{d2K}, we will have the following result:
\begin{equation}
d \leq 1+\log K.
\end{equation}
\end{proof}

%%%%%%%%%%%%%%%%%%%%%%%%%%%%%%%%%%%%%%%%%%%%%%%%%%%%%%%%%%%%%%%%
%%%%%%%%%%%%%%%%%%%%%%%%%%%%%%%%%%%%%%%%%%%%%%%%%%%%%%%%%%%%%%%%

\section{proof of WDM polynomialty}\label{WDM_polynomial}
Under the assumption in Section \ref{sub_WDM}, we are given a set of logically disjoint paths that satisfy the WDM restriction. As explained, this setting implies that each fiber can be used by at most $W$ paths. Therefore, we have the following Lemma:

\begin{lemma}\label{WDM-paths}
Under the WDM restriction, the number of given paths can be at most $W\cdot m$.
\end{lemma}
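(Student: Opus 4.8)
The plan is to bound the number of logically disjoint paths directly from the wavelength restriction. We are given a collection of $s$--$t$ paths that are \emph{logically} disjoint, meaning no two of them share a logical link; equivalently, in the two-node-with-parallel-links abstraction mentioned in Section~\ref{sub_WDM}, each given path corresponds to exactly one parallel logical link, and each such logical link is mapped to some $s$--$t$ route in the physical topology. The WDM restriction says that each fiber carries at most $W$ of these paths, i.e., each of the $m$ fibers appears in the routing of at most $W$ of the given paths.

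The key step is a simple counting (double-counting) argument. First I would observe that every given path, being an $s$--$t$ route in the physical topology, must use at least one fiber; in particular it uses at least one of the $m$ fibers. Now count the incidences between paths and fibers: summing over fibers, the total number of (path, fiber) incidences is at most $W\cdot m$, since each fiber is used by at most $W$ paths. On the other hand, summing over paths, this same total is at least the number of paths, since each path contributes at least one incidence. Combining the two bounds gives that the number of given paths is at most $W\cdot m$, which is exactly the claim of Lemma~\ref{WDM-paths}.

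The main obstacle, such as it is, is purely a matter of making the model precise rather than any real difficulty: one must be careful that the ``paths'' in question are the given logically disjoint candidate paths (not arbitrary $s$--$t$ paths, of which there could be exponentially many) and that ``uses a fiber'' is interpreted at the physical layer via the lightpath routing. Once that is pinned down, the inequality $(\text{number of paths}) \le \sum_{i=1}^{m} |\{j : \text{path } j \text{ uses fiber } i\}| \le W\cdot m$ is immediate. I would also note in passing that this bound is what feeds into Theorem~\ref{WDM-complexity}: with $O(m)$ candidate paths and the size bound $W+1$ from Lemma~\ref{WDM-OPT}, enumerating all subsets of size up to $W+1$ takes $O(m^{W+1})$ time, establishing polynomiality.
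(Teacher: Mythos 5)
Your proof is correct, and it reaches the bound by a slightly different (and more elementary) route than the paper. The paper invokes the Max-Flow Min-Cut Theorem: it takes a minimum $s$--$t$ cut of size $MC$ in the physical topology, notes that every logical path must cross this cut and that each cut fiber carries at most $W$ logical links, and concludes that the number of paths is at most $W\cdot MC \leq W\cdot m$. You instead double-count path--fiber incidences over \emph{all} $m$ fibers: each of the $m$ fibers contributes at most $W$ incidences by the WDM restriction, and each path contributes at least one incidence because it must traverse at least one fiber to get from $s$ to $t$. The two arguments rest on the same pair of facts (every path meets some distinguished fiber set; every fiber meets at most $W$ paths), but yours dispenses with max-flow/min-cut entirely, which is cleaner for proving the stated $W\cdot m$ bound, while the paper's cut-based version buys the potentially much sharper intermediate bound $W\cdot MC$. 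Your care in restricting attention to the \emph{given} logically disjoint candidate paths (rather than all $s$--$t$ paths) and in interpreting ``uses a fiber'' at the physical layer is exactly the right precondition for the counting to go through, and your closing remark correctly identifies how the lemma feeds into Theorem~\ref{WDM-complexity}.
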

\begin{proof}
By the Max-Flow Min-Cut Theorem, in the physical topology the number of disjoint paths between nodes $s$ and $t$ is equal to the minimum $s-t$ cut ($MC$). On the other hand, since a fiber can be used by at most $W$ logical links, each physical path can carry at most $W$ logical links. Therefore, the maximum possible number of logical paths is $W\cdot MC \leq W \cdot m$.
\end{proof}

%%%%%%%%%%%%%%%%%%%%%%%%%%%%%%%%%%%%%%%%%%%%%%%%%%%%%%%%%%%%%%%%
%%%%%%%%%%%%%%%%%%%%%%%%%%%%%%%%%%%%%%%%%%%%%%%%%%%%%%%%%%%%%%%%
\section{epsilon net in WDM}\label{epsnet_WDM}

Using the same techniques discussed in the Section \ref{VC-dim}, we have the following Theorem:

\begin{theorem}\label{WDM-Finder-verifier}
The $\varepsilon$-net algorithm finds a set of survivable paths of size $O(\log W \log \OPT)\OPT$, with high probability and terminates in $O(\OPT \log(\frac{m}{\OPT}))$ iterations.
\end{theorem}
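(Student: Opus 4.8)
The plan is to mirror the argument of Theorem \ref{eps-net-thm-Krestricted} (whose proof in Appendix \ref{VC-dim} goes through Lemma \ref{Finder-THM}, Theorem \ref{thm-epsilon-net}, and Lemma \ref{VC-dim-K}), replacing the path-length bound $K$ everywhere by the wavelength bound $W$. Concretely, I would set up the same set system $F=(X,R)$ with $X=\{P_1,\dots,P_n\}$ the given (logically disjoint) paths and $R=\{f_1,\dots,f_m\}$, where each fiber-set $f_i$ is the set of paths surviving the failure of fiber $i$. As before, an \epsnet for this system is exactly a subset of paths that survives every \RISKY fiber, so the reduction of the survivable-path problem to iterated \epsnet construction carries over verbatim.

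The first substantive step is the analogue of Lemma \ref{VC-dim-K}: under the WDM restriction, the VC-dimension $d$ of $F$ satisfies $d \le 1+\log W$. The argument is the same counting argument: if a subset $A$ of size $d$ is shattered, then for each $P_j \in A$ exactly half of the shattering fibers must exclude $P_j$, so $2^{d-1}$ is at most the number of fibers not containing $P_j$. But ``fiber $i$ does not contain $P_j$'' means path $j$ uses fiber $i$, and in the wavelength-restricted setting each path is a disjoint $s$--$t$ logical path — in fact by Lemma \ref{WDM-paths} / Lemma \ref{WDM-OPT} the relevant bound is that each fiber is used by at most $W$ paths, and dually each path, being short in the abstracted two-node topology, uses at most $W$ fibers (this is the point that needs to be stated carefully, see below). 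Hence $2^{d-1}\le W$, giving $d=O(\log W)$. Feeding $d=O(\log W)$ into Theorem \ref{thm-epsilon-net} shows that a random sample of $O(\frac{\log W}{\varepsilon}\log\frac{\log W}{\varepsilon})$ paths is an \epsnet with high probability, which is the WDM version of Lemma \ref{Finder-THM}.

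The second step is to combine this with the general \epsnet-based set-cover machinery of \cite{HittingSet,Bronniman}: iterating the path-selection algorithm and doubling the weights of paths that survive the not-yet-survived fibers, the algorithm terminates in $O(\OPT\log(\frac{m}{\OPT}))$ iterations, and since each iteration contributes $O(\frac{\log W}{\varepsilon}\log\frac{\log W}{\varepsilon})$ paths (with $\varepsilon$ a constant absorbed into the big-$O$), the total size of the returned survivable path set is $O(\log W\log\OPT)\OPT$, with high probability. This gives both claims of the theorem simultaneously.

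The main obstacle is not the probabilistic machinery — that is black-boxed from \cite{HittingSet,Bronniman} and Theorem \ref{thm-epsilon-net} — but rather pinning down the correct ``dual'' bound that plays the role of $K$ in Lemma \ref{VC-dim-K}. In the path-length case both directions were immediate: each path uses $\le K$ fibers and survives $\ge m-K$ fibers. In the WDM case the hypothesis is symmetric in the other variable: each \emph{fiber} carries $\le W$ paths. One must argue that in the abstracted logical topology (two nodes joined by $O(m)$ parallel logically-disjoint links, cf. Lemma \ref{WDM-paths}) each selected path still uses at most $W$ fibers, so that the shattering count $2^{d-1}\le W$ goes through; alternatively one can redo the VC-dimension bound directly from ``each fiber is used by $\le W$ paths'' by bounding, for a shattered set $A$, the number of distinct traces $A\cap f_i$ that are missing a common path. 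Making this step airtight — and checking that the constant $c$ in $s=c\frac{\log W}{\varepsilon}\log\frac{\log W}{\varepsilon}$ can indeed be chosen uniformly — is where the real care is required; everything else is a transcription of Appendix \ref{VC-dim}.
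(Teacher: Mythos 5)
You have correctly located the crux---the whole theorem rides on bounding the VC-dimension $d$ of the set system $(X,R)$ with $X$ the paths and $f_i$ the set of paths surviving fiber $i$---but the step you flag as needing care is in fact the step that fails, and your proposed repair does not work. The WDM restriction bounds the number of \emph{paths per fiber} by $W$; it says nothing about the number of \emph{fibers per path}, and in the abstracted two-node parallel-link topology a single logical path may be routed over arbitrarily many fibers. So the claim ``each path uses at most $W$ fibers,'' which you need in order to run the $2^{d-1}\le W$ shattering count from Lemma \ref{VC-dim-K}, has no justification, and with it your bound $d\le 1+\log W$ collapses.

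The paper's own argument (Lemma \ref{VC-WDM-bound}) is different and uses only the available direction of the hypothesis: if $A$ is shattered, the empty trace $A\cap f_i=\emptyset$ must be realized by some fiber $i$, i.e.\ every path in $A$ uses fiber $i$; since at most $W$ paths use any fiber, $|A|\le W$, hence $d\le W$. Note, however, that this is a \emph{linear} bound, not a logarithmic one. Feeding $d\le W$ into Theorem \ref{thm-epsilon-net} yields a per-iteration sample of size $O(\frac{W}{\varepsilon}\log\frac{W}{\varepsilon})$ and hence an $O(W\log\OPT)$-type guarantee, not the $O(\frac{\log W}{\varepsilon}\log\frac{\log W}{\varepsilon})$ sample and $O(\log W\log\OPT)$ bound asserted in Lemma \ref{THM-FINDER-WDM} and in the theorem. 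So your instinct that ``the real care is required'' at exactly this point is right, but neither your primary route (which assumes an unavailable dual bound on fibers per path) nor a literal transcription of the paper's lemma (which only gives $d\le W$) delivers $d=O(\log W)$; to obtain the stated factor one would need a genuinely new argument that the VC-dimension is logarithmic in $W$, or else the approximation factor should be weakened to $O(W\log\OPT)$. The remaining machinery in your write-up---the iterated weight-doubling and the $O(\OPT\log\frac{m}{\OPT})$ iteration count black-boxed from \cite{HittingSet,Bronniman}---matches the paper and is fine.
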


To prove Theorem \ref{WDM-Finder-verifier}, we need Lemma \ref{THM-FINDER-WDM}. Then, by an argument similar to the one to prove Thm \ref{eps-net-thm-Krestricted}, the $\varepsilon$-net algorithm will find a set of survivable paths with a $\log W \log \OPT$ approximation bound.

\begin{lemma}\label{THM-FINDER-WDM}
$\forall \varepsilon \in (0,1)$, if $s=c\frac{\log W}{\varepsilon} \log \frac{\log W}{\varepsilon}$ where c is a constant, the path-selection algorithm selects a subset of paths that survives all of the "-Survivable fibers with
high probability.
\end{lemma}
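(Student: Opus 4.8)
The plan is to follow the template of Appendix~\ref{VC-dim}, where the analogous statement (Lemma~\ref{Finder-THM}) was established for the path length restricted version with $K$ playing the role of $W$. First I would recast the ``path-selection'' step as sampling from a set system: let $F=(X,R)$ have ground set $X=\{P_1,\dots,P_n\}$ equal to the given collection of logically disjoint $s$--$t$ paths, and $R=\{f_1,\dots,f_m\}$ where $f_i=\{j:a_{ij}=1\}$ is the set of paths surviving fiber $i$. By the definition of \RISKY fiber (phrased through the weights $W_j$), a subset of paths survives every \RISKY fiber precisely when it is a (weighted) \epsnet of $F$. Theorem~\ref{thm-epsilon-net} then guarantees that a weighted random sample of $O(\frac{d}{\varepsilon}\log\frac{d}{\varepsilon})$ paths is such an \epsnet with high probability, where $d$ is the VC-dimension of $R$. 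Substituting the bound $d=O(\log W)$ makes the sample size exactly $s=c\frac{\log W}{\varepsilon}\log\frac{\log W}{\varepsilon}$, which is the assertion of the lemma.

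Consequently the whole proof reduces to bounding the VC-dimension, and this is the step I expect to be the real obstacle: it is the WDM counterpart of Lemma~\ref{VC-dim-K}, and the counting must now be driven by the WDM restriction ``each fiber is used by at most $W$ paths'' (equivalently, every fiber-set $f_i$ satisfies $|X\setminus f_i|\le W$) instead of ``each path uses at most $K$ fibers.'' Concretely, if a set $A$ with $|A|=d$ is shattered, then every subset of $A$ is realized as a trace $f_i\cap A$ by a distinct fiber; a fiber whose set of non-survivors within $A$ is $A\setminus B$ is used by exactly the paths of $A\setminus B$, so the per-fiber cap $W$, combined with Lemma~\ref{WDM-paths} ($n\le W m$) controlling how many distinct fibers can realize all $2^d$ traces, limits $d$. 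Pushing this bookkeeping through yields $d=O(\log W)$; this is the one place that genuinely exploits the structure of the WDM-restricted instance, and getting the exponents and constants right here is the crux of the argument.

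Given the VC-dimension bound, the conclusion is routine and parallels Appendix~\ref{VC-dim} essentially verbatim: apply Theorem~\ref{thm-epsilon-net} to $F$ with the weights $\{W_j\}$ currently maintained by the $\varepsilon$-net algorithm and with $d=O(\log W)$, to obtain that $s=c\frac{\log W}{\varepsilon}\log\frac{\log W}{\varepsilon}$ i.i.d.\ draws from the distribution $\mu(P_j)=W_j/\sum_k W_k$ form an \epsnet with high probability, hence survive every \RISKY fiber. The only delicate point is matching the weighted definition of \RISKY against the ``$|S|\ge\varepsilon|X|$'' condition in the definition of \epsnet, but since Theorem~\ref{thm-epsilon-net} is already stated in the weighted form, this goes through directly.
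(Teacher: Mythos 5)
Your overall architecture matches the paper's: recast the path-selection step as sampling from the set system $(X,R)$ with $X$ the given paths and $f_i$ the set of paths surviving fiber $i$, invoke Theorem~\ref{thm-epsilon-net} for weighted random samples, and reduce the whole lemma to a VC-dimension bound. The paper's proof of Lemma~\ref{THM-FINDER-WDM} is exactly this one-line reduction to Lemma~\ref{VC-WDM-bound}.

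The gap is at the step you yourself flag as the crux: you assert that ``pushing this bookkeeping through yields $d=O(\log W)$,'' but the ingredients you name cannot deliver that bound. The per-fiber cap applied to the empty trace $f_i\cap A=\emptyset$ says all $d$ paths of $A$ use fiber $i$, hence $d\le W$; the requirement that all $2^d$ traces of a shattered $A$ be realized by distinct fibers gives $2^d\le m$, hence $d\le\log_2 m$; and Lemma~\ref{WDM-paths} ($n\le Wm$) adds nothing beyond that. None of these, alone or combined, yields $d=O(\log W)$, and the claim is in fact false in general: take $W$ paths and $2^W$ fibers, one fiber used by each subset of the paths. Every fiber carries at most $W$ paths, so the WDM restriction holds, yet the full path set is shattered and $d=W$. (Contrast with Lemma~\ref{VC-dim-K}, where the logarithm genuinely appears because each \emph{path} misses at most $K$ fibers, forcing $2^{d-1}\le K$; the WDM cap constrains fibers, not paths, and is not symmetric in this respect.) Accordingly, the paper's own Lemma~\ref{VC-WDM-bound} proves only $d\le W$, which via Theorem~\ref{thm-epsilon-net} supports a sample size of $O\bigl(\frac{W}{\varepsilon}\log\frac{W}{\varepsilon}\bigr)$ rather than the $c\frac{\log W}{\varepsilon}\log\frac{\log W}{\varepsilon}$ in the statement --- so the discrepancy you would need to close is present in the paper as well and is not a matter of bookkeeping you merely left implicit. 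The rest of your argument (the weighted \epsnet formulation and the identification of \RISKY fibers with heavy ranges) is fine and matches the paper.
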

\begin{proof}
Similar to the argument for the proof of Theorem \ref{Finder-THM}, it is enough to prove Lemma \ref{VC-WDM-bound}.
\end{proof}

\begin{lemma}\label{VC-WDM-bound}
In wavelength restricted version of MSP, VC-dimension $d$ is less than $W$.\\
\end{lemma}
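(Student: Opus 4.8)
The plan is to mimic the proof of Lemma~\ref{VC-dim-K}, replacing the path-length bound by the wavelength bound. First I would recall the set system built in Appendix~\ref{VC-dim}: the ground set is $X=\{P_1,\dots,P_n\}$, and the family $R$ consists of the ``fiber-sets'' $f_i=\{j: a_{ij}=1\}$, i.e.\ the set of paths that survive fiber $i$ (equivalently, the paths that do not route over fiber $i$). The only place the wavelength restriction enters is through the size of the \emph{complement} of each fiber-set: since a fiber carries at most $W$ lightpaths, $|\{j: a_{ij}=0\}| = n-|f_i| \le W$ for every fiber $i$; in words, each set in $R$ omits at most $W$ elements of $X$.

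Next I would argue directly about a shattered set. Suppose $A\subseteq X$ with $|A|=d$ is shattered by $R$. By definition every subset of $A$ is realized as $A\cap f_i$ for some fiber $i$, and in particular the empty set is: there is a fiber $i_0$ with $A\cap f_{i_0}=\emptyset$. That means no path of $A$ survives fiber $i_0$, i.e.\ every path in $A$ routes over fiber $i_0$, so $A \subseteq \{j: a_{i_0 j}=0\}$, and therefore $d=|A|\le W$ by the wavelength restriction. Since $A$ was arbitrary, the VC-dimension of $R$ is at most $W$, which is the bound claimed in the lemma (in the same mildly loose sense as Lemma~\ref{VC-dim-K}, which actually yields $d\le 1+\log K$). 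Feeding $d\le W$ into Theorem~\ref{thm-epsilon-net} then shows that the random sample drawn by the path-selection algorithm (according to the current weights) is an $\varepsilon$-net of the fiber-set system with high probability, i.e.\ it survives every \RISKY fiber; that is exactly Lemma~\ref{THM-FINDER-WDM}, and combining it with the hitting-set framework of \cite{HittingSet,Bronniman} gives Theorem~\ref{WDM-Finder-verifier} as in Appendix~\ref{VC-dim}.

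I do not expect a serious obstacle here: unlike the length-restricted case there is no exponential counting step, so the bound comes out linear in $W$ rather than logarithmic. The one point to be careful about is the direction of the wavelength inequality — ``a fiber is used by at most $W$ paths'' must be translated into ``a fiber-set $f_i$ \emph{excludes} at most $W$ of the $n$ paths,'' because in the matrix $A$ a $1$ entry marks a path that \emph{avoids} the fiber. Once that is pinned down, the crux is simply the observation that shattering $A$ forces some physical fiber to realize the ``all-of-$A$-avoided'' pattern $A\cap f_i=\emptyset$, and such a fiber is used by all $d$ paths of $A$, whence $d\le W$; this reasoning also makes clear that the bound is tight once the network has enough fibers.
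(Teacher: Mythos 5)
Your argument is exactly the paper's: both identify the fiber-set system $(X,R)$ with $f_i$ the set of paths surviving fiber $i$, observe that a shattered set $A$ must realize $A\cap f_{i_0}=\emptyset$ for some fiber $i_0$ (so every path of $A$ uses fiber $i_0$), and conclude $d=|A|\le W$ from the wavelength restriction. The proof is correct and matches the paper's approach, including the careful handling of the complement of the fiber-set.
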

\begin{proof}
Let $X=\{P_{1},...,P_{n}\}$ be the set of all paths, and $R=\{f_{1},...,f_{m}\}$ be the set of all fiber sets where a fiber is associated  i.e., $P_{j} \in f_{i}$ if and only if path $j$ survives fiber $i$'s failure.

Let VC-dimension be $d$. Then, by the definition of VC-dimension, there exist a subset $A$ of paths such that $|A|=d$ and intersection of $A$ with all fiber sets generates all the subsets of $A$. In particular, there exist a fiber set $i$ such that $f_{i} \cap A = \emptyset$, which means $A \subset X-f_i$. Thus,
\begin{align}\label{ineq1}
d=|A| \leq |X-f_i|.
\end{align}

On the other hand, under the wavelength restricted assumption, each fiber can be used by at most $W$ paths. Therefore,
\begin{align}\label{ineq2}
n-W \leq |f_{i}|, \quad \forall f_{i}.
\end{align}

Combining inequalities (\ref{ineq1}) and (\ref{ineq2}) results in $d \leq W$.

\end{proof}

%%%%%%%%%%%%%%%%%%%%%%%%%%%%%%%%%%%%%%%%%%%%%%%%%%%%%
%%%%%%%%%%%%%%%%%%%%%%%%%%%%%%%%%%%%%%%%%%%%%%%%%%%%%

\section{Other ILP formulations of MFSP}\label{other-formulations}
\subsection{Link-Based Formulation}

The idea of Link-Based formulation for MFSP problem is the same as formulation for MSP. The only difference is that we do not need to find the paths in the main logical topology using flow constraints and minimize the number of paths. 

For each fiber $r$, let $f_r$ be a binary variable which takes the value 1
if fiber $r$ is selected, and 0 otherwise. Similar to MSP link-based formulation, variables $x_{ijk}$ refers to the logical links $(i,j)$ and constraint (\ref{flow_remain_MFSP}) refers to the flow constraints in the remaining logical topology $E_L^k$ correspondent to the failure of fiber $k$.

\begin{equation}
\begin{array}{ll}
\mbox{minimize} \quad \quad &  \sum_{r=1}^m f_{r} \quad \quad \quad \quad \quad \quad \quad \quad \quad \quad \quad \quad \quad \quad \quad 
\end{array}
\end{equation}

\begin{equation}
\left.\begin{aligned}\label{flow_remain_MFSP}
\mbox{subject to} &  \sum_{(s,j) \in E_{L}^{k}} x_{sjk} = 1 \quad &\forall \mbox{ Fiber }k\\
\quad &\sum_{(i,t) \in E_{L}^{k}} x_{itk} = 1 \quad &\forall \mbox{ Fiber }k \\
\quad & \sum_{(i,j) \in E_{L}^{k}} x_{ijk} - \sum_{(j,i) \in E_{k}} x_{jik} = 0 \quad &\forall k, \forall i\neq s,t
\end{aligned}
\right\}
\end{equation}

\begin{align}
& f_{r} \geq x_{ijk} & \quad \forall i,j,k, \forall f_r \in x_{ijk} \label{link_fiber_relation} \\
& x_{ijk} \in \{0,1\} & \quad \forall i,j,k
\end{align}

Constraint (\ref{link_fiber_relation}) shows the relation between the selected logical links and fibers, such that fiber $i$ is selected if at least a logical linkusing $f_i$ is selected. The objective function is the sum of selected fibers. Hence, solving this ILP formulation will find a set of survivable paths that uses minimum number of fibers.

\subsection{Cut-Based Formulation}

A set of paths will survive any single failure, if in the occurence of any fiber failure there exist at least one path from $s$ to $t$. 

Let $f_r$ be a binary variable for each fiber $r$, and $y_{ij}$ be a binary variable for each logical link $ij$. Let $N$ be the set of all nodes in the logical topology. The objective function is minimizing the total number of selected fibers.

\begin{align}
\mbox{minimize} & \sum _{r=1}^n f_{r} \\
\mbox{subject to} & \sum _{(i,j) \in E_L^{k}: i \in S, j \in \bar S}y_{ij} \geq 1  \quad \forall k, S \subset N, S \neq N,\emptyset \nonumber \\
& \quad\quad\quad\quad\quad\quad\quad\quad\quad\quad s \in S, t \in \bar S \label{CUT} \\
\quad & \quad\quad f_{r} \geq y_{ij} \quad\quad\quad \forall i,j, \forall f_r \in y_{ij} \label{link_fiber_cut} \\
\quad & \quad\quad y_{ij} \in \{0,1\}
\end{align}

Define ``$s-t$ cut" as a cut $[S,\bar S]$ such that $s \in S$ and $t \in \bar S$. Constraint (\ref{CUT}) shows that for every ``$s-t$ cut" in remaining graph $E_L^k$, there exist at least one logical link from $S$ to $\bar S$. This will gaurantee the survivability of network in the failure of fiber $k$. Constraint (\ref{link_fiber_cut}) builds the relation between selected logical links and fibers, such that a fiber $i$ will be selected if at least one logical link using $f_i$ is selected. Constratints (\ref{CUT}) and (\ref{link_fiber_cut}) select the fibers used by a set of survivable paths and the objective function will find the solution to the MFSP problem.

%%%%%%%%%%%%%%%%%%%%%%%%%%%%%%%%%%%%%%%%%%%%%%%%%%%%%%
%%%%%%%%%%%%%%%%%%%%%%%%%%%%%%%%%%%%%%%%%%%%%%%%%%%%%%

\section{Proof of MFSP complexity}\label{MFSP-complexity-prf-app}
In the physical topology shown in Figure ~\ref{complexity-MFSP-fig}, nodes $s$ and $t$ are the starting and ending nodes. Each node on the left side ($n$ nodes) is connected to 3 nodes on the right such that all the nodes on right are covered by the nodes on left. There are $L$ nodes between $s$ and each node on the left where $L$ is a large number (say $L \geq 3m+3n$  so that the left hand side should be the first priority when minimizing the used number of fibers) and there are $m \geq 3$ nodes on the tail of the graph such that every node on right connects to the tail through the first node $r$.

The logical topology is shown in Figure~\ref{complexity-MFSP-logic-fig}.

\begin{figure}[ht]
	\begin{center}
	\includegraphics[scale=0.4]{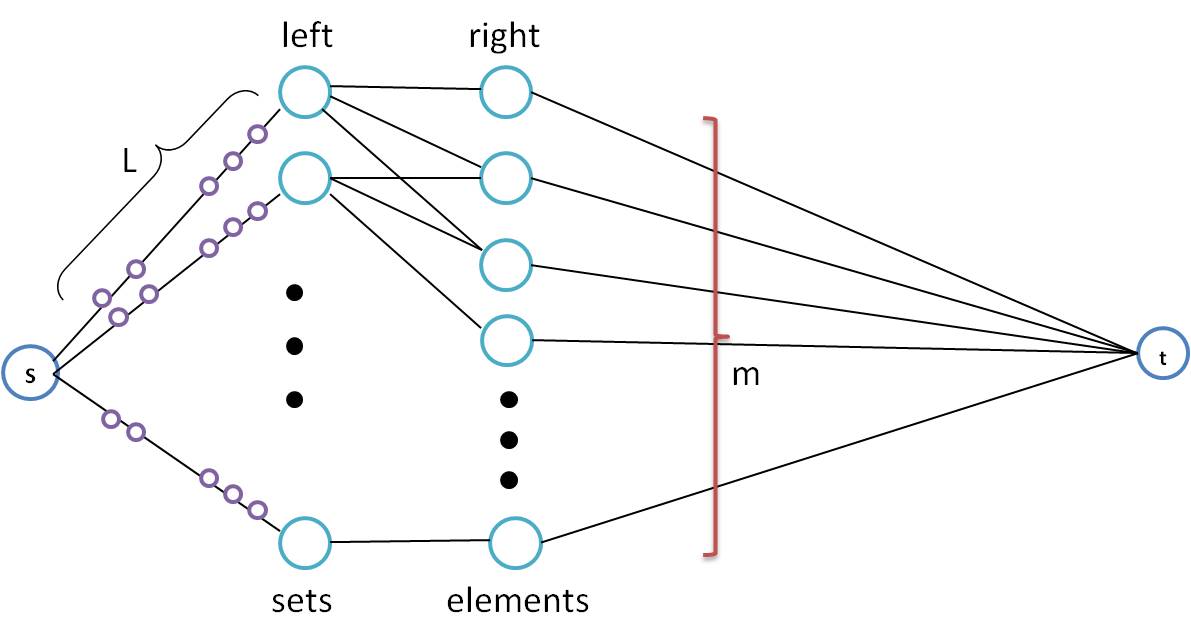}
	\end{center}
	\caption{Logical Topology}
	\label{complexity-MFSP-logic-fig}
\end{figure}

In the logical topology, from $s$ to the nodes on right, each fiber is also a lightpath, while from nodes on right side to t, there are $m$ parallel lightpaths with a specific routing. The first lightpath will be routed on fibers $f_1$, $U_1$ and $L_2$ to $L_m$, the second lightpath will be routed on fibers $f_2$, $L_1$, $U_2$ and $L_3$ to $L_m$ and so on. Therefore, lightpath $i$ will use fibers $f_i$, $U_i$ and all the other $L_j$s ($j \neq i$). \\

To survive any single failure in the firbers from right nodes to node $t$, we need to have at least $m$ paths, each going through one of the parallel logical links. These $m$ paths will not share any fiber from nodes on left to nodes on right, thus any single failure on the fibers between left and right nodes will be survived. Finally, to survive any fiber failure from node $s$ to nodes on the left, it is enoough that at least two of paths use disjoint logical links from node $s$ to left nodes.

Consequently, it is enough just to have $m$ paths covering all nodes on the right hand side. On the other hand, paths between s and left nodes use a large number of fibers. To have a set of paths using the minimum number of fibers, we need to pick the minimum number of nodes from left, to cover all the nodes on right which is a mapping from minimum 3-set cover problem to our problem. The remaining of the proof is explained in the main text.

%%%%%%%%%%%%%%%%%%%%%%%%%%%%%%%%%%%%%%%%%%%%%%%%%%%%%%%
%%%%%%%%%%%%%%%%%%%%%%%%%%%%%%%%%%%%%%%%%%%%%%%%%%%%%%%

\section{Proof of Theorem \ref{cost_greedy}}\label{ACG_PROOF}

Before proving Theorem \ref{cost_greedy}, we need to prove two other Lemmas. Consider a set $S$ of survivable paths and let $F$ be the total number of fibers used by all paths in $S$.

\begin{lemma}\label{path-fiber-thm}
Under the wavelength restricted assumption, the following inequality holds: $\frac{1}{W} \sum_{j=1}^{|S|} C_{j}P_{j} \leq F$
\end{lemma}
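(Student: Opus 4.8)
The plan is to count, for each fiber used by the path set $S$, how many paths in $S$ use it, and then sum over all fibers. Let $F$ be the number of distinct fibers used by the paths in $S$, and for a used fiber $i$ let $w_i = |\{j \in S : a_{ij} = 0\}|$ be the number of selected paths that route over fiber $i$. First I would observe that $C_j$, the number of fibers used by path $j$, satisfies $C_j = |\{i : a_{ij} = 0\}|$, so that
\begin{equation}
\sum_{j=1}^{|S|} C_j P_j = \sum_{j \in S} |\{i : a_{ij}=0\}| = \sum_{i : \text{used by } S} w_i,
\end{equation}
simply by switching the order of summation (counting incidences between selected paths and fibers in two ways).

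Next I would invoke the wavelength restriction, which guarantees $w_i \le W$ for every fiber $i$. Hence each of the $F$ used fibers contributes at most $W$ to the double sum, giving
\begin{equation}
\sum_{j=1}^{|S|} C_j P_j = \sum_{i : \text{used by } S} w_i \le W \cdot F,
\end{equation}
and dividing by $W$ yields $\frac{1}{W}\sum_{j=1}^{|S|} C_j P_j \le F$, as claimed.

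I do not expect any serious obstacle here; the only point requiring a little care is making the bookkeeping precise — namely that the quantity $F$ in the statement really is the cardinality of the set of fibers touched by $S$ (so that each used fiber is counted exactly once on the right-hand side while being counted $w_i$ times on the left), and that the sum $\sum_j C_j P_j$ is genuinely over the selected paths only (those with $P_j = 1$). Once the double-counting identity is set up correctly, the wavelength bound $w_i \le W$ closes the argument immediately. This lemma will presumably be combined with a second lemma lower-bounding $F$ (or relating the greedy amortized costs to $F$) to establish the $O(W \log m)$ approximation ratio in Theorem \ref{cost_greedy}.
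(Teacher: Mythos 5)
Your proof is correct and is essentially the same argument as the paper's: the paper phrases it as counting edges in the bipartite path--fiber incidence graph, where each path node has degree $C_j$ and each fiber node has degree at most $W$, which is exactly your double-counting identity followed by the bound $w_i \le W$. No gaps; your bookkeeping remarks about $F$ being the number of distinct fibers touched by $S$ match the paper's intended reading.
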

\begin{proof}
Figure \ref{figure-path-fiber-W} shows the relation between paths and the fibers used by them. There exist an edge between node $j$ on left ($P_j$) and node $i$ on right ($f_i$) if $f_i \in P_j$. Since each path $j$ is using $C_j$ fibers, the total number of edges is $\sum_{j=1}^{|S|} C_{j}P_{j}$. On the other hand, by assumption, each fiber can be used by at most $W$ paths, therefore each right node can be incident to at most $W$ edges. Thus, we have $\sum_{j=1}^{|S|} C_{j}P_{j} \leq WF$, which completes the proof.

\begin{figure}[h]
\centering
\includegraphics[scale=0.5]{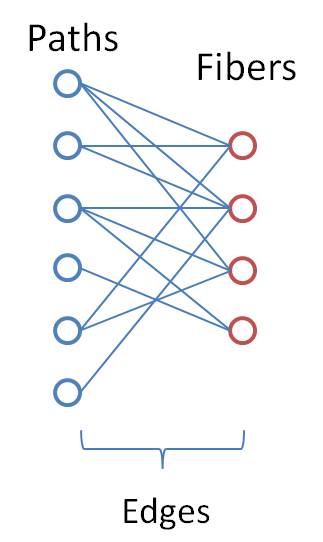}
\caption{relation between paths and fibers }
\label{figure-path-fiber-W}
\end{figure}
\end{proof}

Following theorem is a direct consequence of Lemma \ref{path-fiber-thm}.

\begin{theorem}\label{min-cost-set-cover}
Let $S$ be a set of survivable paths and $R$ be the set of all possible survivable path sets $(S \in R)$. Let $\OPT$ be the minimum number of fibers used by a survivable path set. Then, we have:
$\frac{1}{W} \min_{S \in R} \sum_{j=1}^{|S|} C_{j}P_{j} \leq \OPT \leq \min_{S \in R} \sum_{j=1}^{|S|} C_{j}P_{j}$.
\end{theorem}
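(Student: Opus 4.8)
The plan is to obtain both inequalities directly from Lemma~\ref{path-fiber-thm} together with the elementary fact that the number of \emph{distinct} fibers used by a path set never exceeds the total length of the paths in that set. Throughout, for a survivable path set $S\in R$ write $F(S)$ for the number of fibers it uses and $\Sigma(S)=\sum_{j=1}^{|S|}C_jP_j$ for the total length of its paths counted with multiplicity (so $\Sigma(S)=\sum_{j\in S}C_j$, since every selected path has $P_j=1$). By definition $\OPT=\min_{S\in R}F(S)$, and the union of the fiber sets of the paths in $S$ has cardinality at most $\sum_{j\in S}C_j$, i.e. $F(S)\le\Sigma(S)$ for every $S\in R$.

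For the upper bound $\OPT\le\min_{S\in R}\Sigma(S)$: let $S'$ be a path set achieving $\min_{S\in R}\Sigma(S)$. Since $S'$ is one particular survivable path set, $\OPT\le F(S')$, and by the observation above $F(S')\le\Sigma(S')=\min_{S\in R}\Sigma(S)$. Chaining these gives the right-hand inequality.

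For the lower bound $\frac{1}{W}\min_{S\in R}\Sigma(S)\le\OPT$: let $S^{*}$ be a survivable path set attaining the minimum number of fibers, so $F(S^{*})=\OPT$. Applying Lemma~\ref{path-fiber-thm} to $S^{*}$ yields $\frac{1}{W}\Sigma(S^{*})\le F(S^{*})=\OPT$. Since $S^{*}\in R$ we have $\min_{S\in R}\Sigma(S)\le\Sigma(S^{*})$, and combining these two gives $\frac{1}{W}\min_{S\in R}\Sigma(S)\le\frac{1}{W}\Sigma(S^{*})\le\OPT$. Putting the two bounds together produces the claimed sandwich.

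The only subtlety to watch is that the two minima are in general attained by \emph{different} path sets — $S'$ minimizes total path length while $S^{*}$ minimizes fiber count — so each inequality must be routed through the set that makes it valid rather than through a single ``optimal'' set; beyond this bookkeeping there is no real obstacle, which is why the statement is genuinely a direct consequence of Lemma~\ref{path-fiber-thm}.
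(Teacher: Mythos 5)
Your proof is correct and follows essentially the same route as the paper: the left inequality comes from applying Lemma~\ref{path-fiber-thm} to the fiber-minimizing set, and the right inequality from the fact that the additive-cost-minimizing set is itself a feasible survivable set whose distinct fiber count is at most its total additive cost. Your version is merely more explicit than the paper's about the fact that the two minima may be attained by different sets, which is a point the paper glosses over but does not get wrong.
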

\begin{proof}
Lemma \ref{path-fiber-thm} gives the left inequality. In the right inequality, $\min_{S \in R} \sum_{j=1}^{|S|} C_{j}P_{j}$ outputs a set $S$ of survivable paths, therefore it is feasible and gives an upperbound for the optimal solution.
\end{proof}

By Theorem \ref{min-cost-set-cover}, the optimal solution to the problem $\min_{S \in R} \sum_{j=1}^{|S|} C_{j}P_{j}$ provides a $W$ approximation to the MFSP problem. Note that this problem seeks to find a set of survivable paths with minimum cost where the cost of a path is the number of fibers used by that path, and these costs are assumed to be additive. Clearly, this problem is a reduction from minimum cost set cover problem. Since the minimum cost set cover problem is NP-hard, finding a set of survivable paths with minimum additive costs is also NP-hard. Therefore, we use the explained additive cost greedy algorithm to approximate the additive cost survivable path set problem. Now we can prove the $O(W\log m)$ bound stated in Theorem \ref{cost_greedy}.

\begin{proof}
Let $\OPT$ be optimal value of MFSP problem. By the argument in~\cite{Chvatal}, the additive cost of paths selected by ACG is not larger than $O(\log m)\OPT$, i.e.,

\begin{align}
\frac{Greedy}{\log m} \leq \min_{P \in S} \sum_{j=1}^n C_{j}P_{j} \label{greedy_cost}
\end{align}

where $Greedy$ denotes the additive cost of ACG. Combining equation (\ref{greedy_cost}) and Theorem \ref{min-cost-set-cover} gives the following inequality:

\begin{align}
\frac{Greedy}{W \log m} \leq \OPT
\end{align}

\end{proof}

%%%%%%%%%%%%%%%%%%%%%%%%%%%%%%%%%%%%%%%%%%%%%%%%%%%%%%%%%%%%%%%
%%%%%%%%%%%%%%%%%%%%%%%%%%%%%%%%%%%%%%%%%%%%%%%%%%%%%%%%%%%%%%%

\section{Epsilon-Net in MFSP}\label{epsnet_MFSP}

Combining the results of Randomized algorithm described in subsection \ref{sub_K} and Theorem \ref{min-cost-set-cover} results in the following corollary.

\begin{corollary}
Using the $\varepsilon$-net algirithm, one can find an $O(W\log W \log \OPT)$ approximation.
\end{corollary}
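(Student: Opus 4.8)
The plan is to reduce MFSP to the minimum additive-cost survivable path set problem, solve the latter approximately with the weighted $\varepsilon$-net machinery, and then pay the factor $W$ supplied by Theorem~\ref{min-cost-set-cover}. Recall that the minimum additive-cost survivable path set problem asks for a survivable set $S$ minimizing $\sum_{j\in S}C_j$, where $C_j$ is the number of fibers used by path $j$; this is precisely a weighted set cover instance whose ground elements are the fibers, whose sets are the paths (path $j$ covering exactly the fibers it survives), and where set $j$ has weight $C_j$. Write $\OPT_{\mathrm{add}}=\min_{S\in R}\sum_{j\in S}C_j$ for its optimum, and $\OPT$ for the MFSP optimum (the minimum number of fibers in a survivable path set), as in Theorem~\ref{min-cost-set-cover}.

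First I would observe that the $\varepsilon$-net algorithm of Section~\ref{epsilon-net_description_section} applies verbatim to this weighted instance: initialize the path weights to $W_j=C_j$ instead of $1$, run the same weighted random-sampling / weight-doubling loop, and invoke the weighted $\varepsilon$-net guarantee of Theorem~\ref{thm-epsilon-net}. The only instance-dependent quantity entering the required sample size is the VC-dimension of the set system $(\{P_1,\dots,P_n\},\{f_1,\dots,f_m\})$, and Lemma~\ref{VC-WDM-bound} already bounds this by $W$ in the wavelength-restricted setting. Hence, exactly as in Theorem~\ref{WDM-Finder-verifier}, the iterative reweighting terminates in a polynomial number of iterations and returns a survivable set whose \emph{additive cost} is $O(\log W\log\OPT)\cdot\OPT_{\mathrm{add}}$.

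Next I would chain the two bounds. By Theorem~\ref{min-cost-set-cover} we have $\OPT_{\mathrm{add}}\le W\cdot\OPT$. Moreover, for any path set the number of \emph{distinct} fibers it uses is at most its additive cost, since each used fiber contributes at least one unit of additive cost. Therefore the number of fibers used by the set returned by the (weighted) $\varepsilon$-net algorithm is at most its additive cost, which is at most $O(\log W\log\OPT)\cdot\OPT_{\mathrm{add}}\le O(\log W\log\OPT)\cdot W\cdot\OPT=O(W\log W\log\OPT)\cdot\OPT$, giving the claimed approximation ratio.

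The main obstacle is the first step: one must check that the weighted variant of the iterative-reweighting argument of~\cite{HittingSet,Bronniman} goes through when set weights equal the costs $C_j$ --- in particular, that the sample-size bound depends only on the VC-dimension (bounded by $W$ via Lemma~\ref{VC-WDM-bound}) and not on the spread of the weights, and that the termination analysis of the weight-doubling loop is unaffected by the non-uniform initialization. Once that is granted, the composition with Theorem~\ref{min-cost-set-cover} and the observation ``number of distinct fibers $\le$ additive cost'' are routine.
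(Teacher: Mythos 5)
Your proposal is correct and follows essentially the same route as the paper, whose own proof is just the one-line remark ``similar to the proof of Theorem~\ref{cost_greedy}'': approximate the minimum \emph{additive-cost} survivable path set with the ($W$-bounded VC-dimension) $\varepsilon$-net machinery to within $O(\log W\log\OPT)$, then convert additive cost to distinct-fiber count via Theorem~\ref{min-cost-set-cover}, paying the extra factor $W$. You actually supply more detail than the paper does --- in particular you correctly flag the one step the paper leaves implicit, namely that the weight-doubling $\varepsilon$-net analysis must be checked in the weighted (cost-initialized) setting.
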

\begin{proof}
Similar to the proof of Theorem \ref{cost_greedy}.
\end{proof}

\end{document}